\documentclass[11pt,hidelinks]{article}
\topmargin=-1.5cm
\hoffset=-1.0in
\textwidth=170mm \textheight=240mm

\usepackage{psfrag,graphicx,amsmath,amsthm,amssymb,hyperref,color}

\newtheorem{theorem}{Theorem}[section]
\newtheorem{lemma}{Lemma}[section]
\numberwithin{equation}{section}

\newcommand{\aver}[1]{\left\{\!\!\left\{#1\right\}\!\!\right\}}

\newcommand{\jump}[1]{\left[\!\left[#1\right]\!\right]}
\newcommand{\bigjump}[1]{\left[\!\!\left[#1\right]\!\!\right]}


\DeclareFontEncoding{FMS}{}{}
\DeclareFontSubstitution{FMS}{futm}{m}{n}
\DeclareFontEncoding{FMX}{}{}
\DeclareFontSubstitution{FMX}{futm}{m}{n}
\DeclareSymbolFont{fouriersymbols}{FMS}{futm}{m}{n}
\DeclareSymbolFont{fourierlargesymbols}{FMX}{futm}{m}{n}
\DeclareMathDelimiter{\tbar}{\mathord}{fouriersymbols}{152}{fourierlargesymbols}{147}

\begin{document}
\baselineskip 11pt
\title{\Large\bf Discontinuous Galerkin Immersed Finite Element Methods \\
for Parabolic Interface Problems\thanks{Contract grant sponsor: Project of Shandong province higher educational science and technology program(J14LI03)}}
\author{Qing Yang\thanks{School of Mathematical Science, Shandong
Normal University, Jinan 250014, China}\ \ and Xu Zhang\thanks{Department of Mathematics,
Purdue University, West Lafayette, IN, 47907, xuzhang@purdue.edu}}
\date{}
\maketitle
\baselineskip 15pt
\begin{center}
\begin{minipage}{150mm}{\small
{\bf Abstract}\hspace{2mm} In this article, interior penalty discontinuous Galerkin methods using immersed finite
element functions are employed to solve parabolic interface problems.
Typical semi-discrete and fully discrete schemes are presented and analyzed. Optimal convergence for both semi-discrete and fully discrete schemes are proved. Some numerical experiments are provided to validate our theoretical results.
\par
 \vspace{3mm}
 {\bf Key words:}\hspace{2mm}  parabolic interface problems, discontinuous Galerkin,
 immersed finite element, error estimates.
\par
\vspace{3mm}
{\bf 2010 Mathematics Subject Classifications
}\hspace{2mm}65M15,\hspace{2mm}65M60}
\end{minipage}
\end{center}

\section{Introduction}
\par
\noindent
Let $\Omega\subset \mathbb{R}^2$ be a rectangular domain or a union of rectangular domains. Assume that $\Omega$ is separated by a smooth curve $\Gamma$ into two sub-domains $\Omega^-$ and $\Omega^+$, \emph{i.e.}, $\overline
\Omega=\overline{{\Omega^-}\cup {\Omega^+}\cup\Gamma}$, see the left plot in Figure \ref{fig: domain}. Let $[0,T]$ be a time interval.
We consider the linear parabolic interface problem
\begin{eqnarray}
\frac{\partial u}{\partial t}-\nabla\cdot(\beta\nabla u)&=&f(\mathbf{x},t),~~~~\mathbf{x} = (x,y)\in\Omega^+\cup\Omega^-,\ t\in  (0,T], \label{eq:parab_eq}\\
u&=&g(\mathbf{x},t),~~~~\mathbf{x}\in\partial\Omega,~~~t\in  (0,T], \label{eq:parab_eq_bc}\\
u&=&u_0(\mathbf{x}),~~~~~\mathbf{x}\in\overline{\Omega},~~~t=0. \label{eq:parab_eq_ic}
\end{eqnarray}
Here, the diffusion coefficient $\beta(\mathbf{x},t)$ is time independent and, without loss of generality, a piecewise constant function over $\Omega$, \emph{i.e.},
\[
\beta(\mathbf{x})=\left\{
\begin{array}{ll}
\beta^-,& ~~\mathbf{x} \in\Omega^-, \\
\beta^+,& ~~\mathbf{x} \in\Omega^+,
\end{array}
\right.
\]
and min$\{\beta^-,\beta^+\}>0$. Across the interface curve $\Gamma$, we assume that the solution and the normal component of the flux are continuous for any time $t\in [0,T]$, \emph{i.e.},
\begin{eqnarray}
 &&\jump{u}_{\Gamma}=0, \label{eq:parab_eq_jump_1} \\
 &&\bigjump{\beta\frac{\partial u}{\partial \mathbf{n}}}_{\Gamma}=0. \label{eq:parab_eq_jump_2}
\end{eqnarray}
Here $\jump{v}_{\Gamma} = (v|_{\Omega^+})|_{\Gamma} - (v|_{\Omega^-})|_{\Gamma} $ denotes the jump across the interface $\Gamma$.
\begin{figure}[ht]
\begin{minipage}[c]{0.5\textwidth}
\centering
\setlength{\unitlength}{1.5mm}
\includegraphics[totalheight=1.4in]{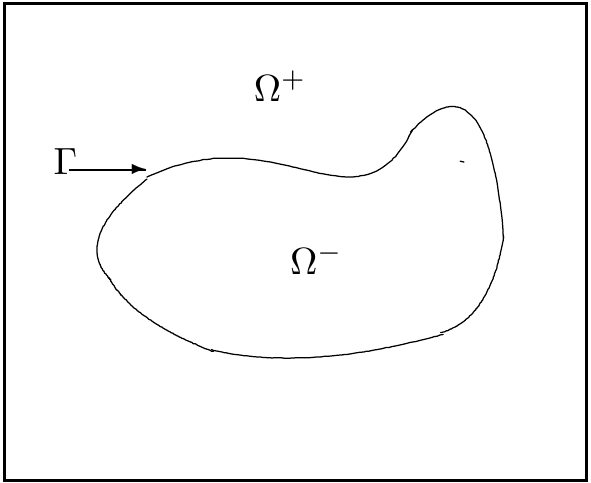}
\end{minipage}%
\begin{minipage}[c]{0.5\textwidth}
\centering
\setlength{\unitlength}{1.5mm}
\includegraphics[totalheight=1.4in]{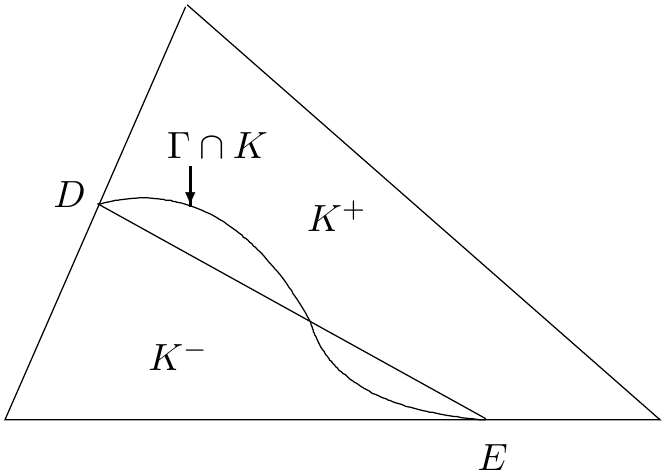}
\end{minipage}
\caption{\small The simulation domain $\Omega$ (left) and an interface element (right).}
\label{fig: domain}
\end{figure}
\par

In science and engineering, many physical phenomenons can be described by interface problems such as \eqref{eq:parab_eq} - \eqref{eq:parab_eq_jump_2}. Hence, solving interface problems accurately and efficiently is of great importance and has been studied for decades. It is well-known that classic numerical methods, such as finite element methods, use body-fitting meshes to solve interface problem to get optimal convergence \cite{2000BabuskaOsborn,1996BrambleKing,1998ChenZou}. The terminology body-fitting means a solution mesh has to be aligned with interfaces. Such restriction on mesh can hinder the applicability of conventional finite element methods in certain simulations. For example, it could prevent the use of structured meshes unless the interface geometry is trivial. In addition, when dealing with a moving interface problem, \emph{i.e.} $\beta = \beta(\mathbf{x},t)$, solution meshes have to be regenerated for each time step to be considered, which inevitably increases its computational costs. In order to overcome this limitation from the conventional finite element methods, the \textit{immersed finite element (IFE)} methods have been developed and extensively studied in the past two decades since the first article \cite{1998Li}. A prominent feature of IFE methods is that the solution mesh is independent of interface because IFE methods allow an interface to cut through elements, see the right plot in Figure \ref{fig: domain}. Consequently, one can use structured or even Cartesian meshes to solve problems with nontrivial interface geometry. This renders IFE methods great popularity in solving a variety of interface problems, such as elliptic interface problem \cite{2010ChouKwakWee,2008HeLinLin,2014JiChenLi,2003LiLinWu,2001LinLinRogersRyan,2015LinLinZhang,2010VallaghePapadopoulo,2013ZhangTHESIS}, elasticity interface problems \cite{2010GongLi,2013LinSheenZhang,2012LinZhang}, biharmonic interface problems \cite{2011LinLinSunWang}, and Stokes interface problems \cite{2015AdjeridChaabaneLin}, to name only a few.

So far, most IFE methods are developed for stationary interface problems. Recently, it starts to gain more attention on developing IFE methods for time-dependent interface problems. For instance, in \cite{2012WangWangYu}, transient
advection-diffusion equations with interfaces was treated by an immersed Eulerian-Lagrangian localized adjoint method. In \cite{2013LinSheen}, numerical solution to parabolic interface problem was considered by applying IFE methods together with the Laplacian transform. Crank-Nicolson-type fully discrete IFE methods and IFE method of lines were derived for parabolic problems with moving interface in \cite{2013HeLinLinZhang,2013LinLinZhang1}. Error analysis for a parabolic interface problem was presented in \cite{2015LinYangZhang2}.
\par
Discontinuous Galerkin (DG) finite element methods were introduced in 1970s \cite{1973BabuskaZlamal,1973ReedHill}. Because the discontinuous approximation functions are employed,  DG methods have many advantages such as high parallelizability, localizability, and easy handling of complicated geometries; therefore, DG methods have been used widely in solving different types PDEs \cite{2001ArnoldBrezziCockburnMarini,1999BaumannOden, 1976DouglasDupont,2008Riviere}. The idea of combining IFE and DG methods together to solve elliptic interface problems were proposed in \cite{2010HeLinLin, 2014HeLinLin}. Numerical analysis for discontinuous Galerkin immersed finite element (DG-IFE) methods was studied in our recent paper \cite{2015LinYangZhang1} for elliptic interface problem. The optimal convergence was obtained in a mesh-dependent energy norm. The aim of this paper is to extend the DG-IFE methods and error analysis for parabolic interface problem. One motivation to study the DG-IFE methods is that there is no continuity imposed on IFE space. Hence, it is more flexible perform local adaptive $h$ and $p-$ refinement, at the same time keeping solution meshes structured. This feature was demonstrated in \cite{2015LinYangZhang1} by various examples.

The rest of this paper is organized as follows. In Section 2, we consider the semi-discrete method and two prototypical fully discrete methods, \emph{i.e.}, backward Euler and Crank-Nicolson methods. In Sections 3 and 4, we derive the \textit{a priori} error estimates for semi-discrete and fully discrete methods, respectively. In Section 5, some numerical examples are reported to verify our theoretical estimates. A few concluding remarks are presented in Section 6.

\section{Discontinuous Galerkin immersed finite element methods}
In this section, we introduce the discontinuous Galerkin immersed finite element methods for solving the parabolic interface problem \eqref{eq:parab_eq} - \eqref{eq:parab_eq_jump_2}.
\subsection{Notations and Preliminaries}
Throughout this paper, we use standard notations for Sobolev spaces and their norms. In addition, we need to define piecewise Sobolev spaces which depend on the location of interface.
Let $D$ be a subset of $\Omega$ that is cut through by the interface $\Gamma$. For $r\geq 1$ , we define
\[
\tilde H^{r}(D)=\{v\in L^2(D): v|_{D\cap\Omega^s}\in
H^{r}(D\cap\Omega^s), s=+\ \text{or}\ -\}
\]
equipped with the norm
\[
\|v\|^2_{\tilde H^{r}(D)}:=\|v\|^2_{H^{r}(D\cap\Omega^-)}+\|v\|^2_{H^{r}(D\cap\Omega^+)}.
\]
For a function $u(\mathbf{x},t)$, we consider it as mapping from the time
interval $[0,T]$ to a normed space $V$ equipped with the norm $\|\cdot\|_{V}$. Furthermore, for any nonnegative number $k\geq 1$, we define
\[
L^k(0,T;V)=\left\{u: [0,T]\rightarrow V ~~\text{measurable}: \int_{0}^{T}\|u(\cdot,t)\|_V^kdt<\infty\right\},
\]
and
\[
\|u\|_{L^k(0,T;V)}=\left(\int_{0}^{T}\|u(\cdot,t)\|_V^kdt\right)^{1/k}.
\]
Similarly, we can define the standard space $H^p(0,T;V)$ for any integer $p>0$. Throughout this paper, we will use the letter $C$ to denote a generic positive constant which may take different values in different places. We usually use $u_t$
, $u_{tt}$, etc to denote the partial derivatives of $u$ with respect to the time variable $t$.

Let $\mathcal{T}_h = \{K\}$ be a Cartesian triangular or
rectangular mesh of $\Omega$ with mesh size $h$. An element $K$ is called an interface element if it is cut through by the interface $\Gamma$. Otherwise, we name it a non-interface element. The set of interface elements and non-interface elements of $\mathcal {T}_h$ are denoted by $\mathcal{T}_h^i$ and
$\mathcal{T}_h^n$, respectively.

Let $\mathcal{E}_h = \{e\}$ be the set of all edges in the mesh $\mathcal{T}_h$. Let $\mathcal {\mathring{E}}_h$ and $\mathcal{E}_h^b$ be the set of interior edges and boundary edges, respectively. Clearly, $\mathcal{E}_h = \mathcal {\mathring{E}}_h\cup\mathcal{E}_h^b$. An edge $e$ is called an interface edge if it intersect with $\Gamma$, otherwise it is a noninterface edge. The set of interface edges and non-interface edges are denoted by $\mathcal {E}_h^i$ and $\mathcal {E}_h^n$, respectively. Moreover, $\mathcal {\mathring{E}}_h^i$ and $\mathcal{\mathring{E}}_h^n$ denote the set of interior interface edges and interior non-interface edges, respectively.

Without loss of generality, we assume that the following hypotheses of mesh \cite{2015LinYangZhang1} hold:
\begin{description}
  \item[(H1)] If one edge of an element meets the interface $\Gamma$ at more than one point, this edge is part of $\Gamma$.
  \item[(H2)] If the interface $\Gamma$ meets the boundary of an element at two points, these two points are on
different edges of this element.
\end{description}
\par
According to conditions (H1) and (H2), each interface element intersects with the interface $\Gamma$ at two points, located on different edges. The intersection points are denoted by $D$ and $E$, and the line segment $\overline{DE}$ divides $K$ into two parts $K^+$ and $K^-$ such that
$K=K^+\cup K^-\cup \overline{DE}$, see
the right plot of Figure \ref{fig: domain}. We introduce the broken Sobolev space $\tilde H^2(\mathcal{T}_h)$ on the mesh $\mathcal{T}_h$:
\begin{eqnarray*}
\tilde H^2(\mathcal{T}_h)=\{v\in L^2(\Omega): && \forall
K\in\mathcal{T}_h^n, v|_K\in H^2(K);\\
 &&\forall
K\in\mathcal{T}_h^i,  v|_{K}\in H^1(K), v|_{K^s}\in H^2(K^s), s=+,-\}.
\end{eqnarray*}

\par
For each edge $e\in \mathcal{E}_h$, we assign a unit normal vector $\mathbf{n}_e$ according to the following rules: if $e\in\mathcal{E}_h^b$, then $\mathbf{n}_e$ is taken to be the unit outward normal vector of $\partial\Omega$; if $e\in\mathcal {\mathring E}_h$, shared by two elements $K_{e,1}$ and $K_{e,2}$, the normal vector $\mathbf{n}_e$ is oriented from $K_{e,1}$ to $K_{e,2}$. For a function
$u$ defined on $K_{e,1}\cup K_{e,2}$, which may be discontinuous across $e$, we define its \textit{average} and \textit{jump} as follows
\begin{equation}\label{eq: averjump}
  \aver{u}_e =
  \left\{
    \begin{array}{ll}
      \frac{1}{2}\left[(u|_{K_{e,1}})|_e+(u|_{K_{e,2}})|_e\right], &~ \text{if}~ e\in\mathcal {\mathring E}_h  \\
      u|_{e}, & ~\text{if}~e\in\mathcal{E}_h^b,
    \end{array}
  \right.~~~
  \jump{u}_e =
  \left\{
    \begin{array}{ll}
      (u|_{K_{e,1}})|_e-(u|_{K_{e,2}})|_e, & ~\text{if}~ e\in\mathcal {\mathring E}_h  \\
      u|_{e}, & ~\text{if}~e\in\mathcal{E}_h^b.
    \end{array}
  \right.
\end{equation}
For simplicity, we often drop the subscript $e$ from these notations as long as there is no danger to
cause any confusions.
\subsection{DG-IFE Methods}
In this subsection, we derive the DG-IFE methods for the parabolic interface problem
\eqref{eq:parab_eq} - \eqref{eq:parab_eq_jump_2}. First, we multiply equation \eqref{eq:parab_eq} by a test function $v\in
\tilde H^2(\mathcal{T}_h)$ and then integrate both sides on each element
$K\in\mathcal{T}_h$. For a non-interface element, a direct application of
 Green's formula gives
\begin{equation} \label{weak_local}
\int_Ku_t vd\mathbf{x}+\int_{K}\beta\nabla
u\cdot\nabla vd\mathbf{x}-\int_{\partial K}\beta\nabla
u\cdot{\mathbf{n}_K}vds=\int_K fvd\mathbf{x}.
\end{equation}
For an interface element, \eqref{weak_local} holds true as we perform Green's formula piecewisely on each sub-element separated by the interface.
For more detail of this procedure, we refer readers to \cite{2015LinYangZhang1}.
Then we summarize \eqref{weak_local} over all elements to obtain
 \begin{equation*}
\int_\Omega u_tvd\mathbf{x}+
\sum\limits_{K\in\mathcal{T}_h}\int_K\beta\nabla u\cdot \nabla
vd\mathbf{x}
-\sum_{K\in \mathcal{T}_h}\int_{\partial K}\beta\nabla u\cdot
\mathbf{n}_K vds=\int_{\Omega}fvd\mathbf{x}.
\end{equation*}
Rewriting the third term as the summation over all edges in $\mathcal{E}_h$, and using the notations in \eqref{eq: averjump} we have
 \begin{equation}
\int_\Omega u_tvd\mathbf{x}+\sum\limits_{K\in\mathcal{T}_h}\int_K\beta\nabla u\cdot \nabla
vd\mathbf{x}-\sum\limits_{e\in \mathcal{E}_h}\int_{e}\aver{\beta\nabla u\cdot
\mathbf{n}_e}\jump{v}ds=\int_{\Omega}fvd\mathbf{x}.
\end{equation}
Let $H_h=\tilde H^2(\Omega)+\tilde H^2(\mathcal{T}_h)$, then we can define a bilinear form $a_\epsilon$:
$H_h\times H_h\rightarrow \mathbb{R}$:
\begin{eqnarray}\nonumber
a_\epsilon(u,v)&=&\sum\limits_{K\in\mathcal{T}_h}\int_K\beta\nabla
u\cdot \nabla v d\mathbf{x}-\sum\limits_{e\in \mathcal{E}_h}\int_{e}\aver{\beta\nabla u\cdot \mathbf{n}_e}\jump{v}ds\\
&&+\epsilon\sum\limits_{e\in \mathcal{E}_h}\int_{e}\aver{\beta\nabla
v\cdot \mathbf{n}_e}\jump{u}ds+\sum\limits_{e\in
\mathcal{E}_h}\int_{e}\frac{\sigma_e}{|e|}\jump{u}\jump{v}ds,
\end{eqnarray}
where $\sigma_e\geq 0$ is the penalty parameter and $|e|$ stands for the length of
$e$. The parameter $\epsilon$ in $a_\epsilon(\cdot,\cdot)$ may take the value $-1$, $0$, or $1$. Note that $a_\epsilon(\cdot,\cdot)$ is symmetric if
$\epsilon=-1$ and is nonsymmetric otherwise. The regularity of exact solution $u\in H^1(\Omega)$ implies $\jump{u} =0$ on every interior edge $e\in \mathcal{\mathring E}_h$. Thus, for every $\epsilon$ we have
\[
\epsilon\sum\limits_{e\in \mathcal{\mathring E}_h}\int_{e}\aver{\beta\nabla v\cdot
\mathbf{n}_e}\jump{u}ds=0,~~~\text{and}~~~\sum\limits_{e\in
\mathcal{\mathring E}_h}\int_{e}\frac{\sigma_e}{|e|}\jump{u}\jump{v}ds=0.
\]
We define the linear form $L$: $H_h\rightarrow \mathbb{R}$:
\[
L(v)=\int_{\Omega}fvd\mathbf{x}+\sum\limits_{e\in \mathcal{E}_h^b}\int_{e}\left(\epsilon(\beta\nabla
v\cdot \mathbf{n}_e)+\frac{\sigma_e}{|e|}v\right)gds.
\]
Now, we obtain the weak form of the parabolic interface problem
\eqref{eq:parab_eq}-\eqref{eq:parab_eq_jump_2}:
\begin{eqnarray}
\left(u_t,
v\right)+a_\epsilon(u,v)&=&L(v), \label{eq:weak_form}\
\ \forall v\in \tilde H^2(\mathcal{T}_h),\\
u|_{t=0}&=&u_0. \label{eq:weak_init_cond}
\end{eqnarray}
\par
We now introduce finite-dimensional IFE subspaces of the broken Sobolev
space $\tilde H^2(\mathcal{T}_h)$, which will be used to approximate \eqref{eq:weak_form}-\eqref{eq:weak_init_cond}. For each element $K\in\mathcal{T}_h$, let $d_K=3$ for triangular elements and $d_K=4$ for rectangular elements.
If $K\in\mathcal{T}_h^n$, we choose $\phi_i(\mathbf{x})$, $1\leq i\leq d_K$ to be the standard linear or
bilinear nodal functions. Otherwise, $\phi_i(\mathbf{x})$, $1\leq i\leq d_K$ are chosen to be the linear
or bilinear IFE basis functions defined in \cite{2004LiLinLinRogers,2003LiLinWu} and \cite{2009HeTHESIS,2008HeLinLin}, respectively. For each element $K\in \mathcal{T}_h$, we define the local FE/IFE space to be
\[
S_h(K)=span\{\phi_i, 1\leq i\leq d_K\}.
\]
Then, the discontinuous IFE space over the
mesh $\mathcal{T}_h$ is defined as
\[
S_h(\mathcal{T}_h)=\{v\in L^2(\Omega):
v|_K\in S_h(K),~\forall K\in \mathcal{T}_h\}.
\]

\par
For every noninterface element $K \in \mathcal{T}_h^n$, $S_h(K)$ is a subspace of $H^2(K)$. For interface element $K \in \mathcal{T}_h^i$, every function $v\in S_h(K)$ is either a linear or a bilinear IFE function. It is has been shown in \cite{2004LiLinLinRogers} and \cite{2009HeTHESIS,2008HeLinLin} that such IFE function $v \in H^1(K)$ and $v|_{K^s} \in H^2(K^s), s = \pm$, but $v\notin H^2(K)$. It can be easily shown that $S_h(\mathcal{T}_h)\subset \tilde H^2(\mathcal{T}_h)$. We will use $S_h(\mathcal{T}_h)$ to discretize the weak formulation \eqref{eq:weak_form} and \eqref{eq:weak_init_cond} of the parabolic interface problem.\\

\noindent
{\bf Semi-discrete DG-IFE scheme}: Find $u_h: [0,T]\rightarrow
S_{h}(\mathcal{T}_h)$ such that
\begin{eqnarray}
\left( u_{h,t},
v_h\right)+a_\epsilon(u_h,v_h)&=&L(v_h),\ \ \forall v_h\in
S_{h}(\mathcal{T}_h), \label{eq:DG-IFE_semi}\\
u_h(\mathbf{x},0)&=&u_{0h}(\mathbf{x}),\ \ \ \mathbf{x}\in\Omega, \label{eq:DG-IFE_semi_ic}
\end{eqnarray}
where $u_{0h}$ is an approximation of $u_0$ in the space
$S_{h}(\mathcal{T}_h)$. \\

\noindent
 For a positive integer $N_t$, let $\Delta t=T/N_t$ be
the time step and $t^n=n\Delta t$, ($n=0,1,\cdots, N_t$). For any function $\varphi(t)$, we let $\varphi^n=\varphi(t^n)$, $n=0,1,\cdots,N_t$. For a sequence $\{\varphi^n\}_{n=0}^{N_t}$, we define
\[
\varphi^{n,\theta}=
\theta\varphi^{n}+(1-\theta)\varphi^{n-1}\ \ \forall~ 0\leq\theta\leq 1,\ \ ~~\partial_t \varphi^n=\frac{\varphi^{n}-\varphi^{n-1}}{\Delta t}, \ \ n=1,2,\cdots,N_t.
\]
{\bf Fully discrete DG-IFE scheme}: Find a sequence $\big\{u_h^n\big\}_{n=0}^{N_t}$ of
functions in $S_{h}(\mathcal{T}_h)$ such that
\begin{eqnarray}
\left(\partial_t u_h^n, v_h\right)+a_\epsilon(u_h^{n,\theta},v_h)&=&L^{n,\theta}(v_h),\ \ \forall v_h\in
S_{h}(\mathcal{T}_h), \label{eq:DG-IFE_full_disc}\\
u_h^0(\mathbf{x})&=&u_{0h}(\mathbf{x}),\ \ \ \mathbf{x}\in\Omega, \label{eq:DG-IFE_full_disc_ic}
\end{eqnarray}
where
\begin{equation*}
  L^{n,\theta}(v) = \int_{\Omega}f^{n,\theta}vd\mathbf{x}+\sum\limits_{e\in \mathcal{E}_h^b}\int_{e}\left(\epsilon(\beta\nabla
v\cdot \mathbf{n}_e)+\frac{\sigma_e}{|e|}v\right)g^{n,\theta}ds.
\end{equation*}
Note that the fully discrete DG-IFE scheme is the backward Euler scheme when $\theta=1$, and it is the
Crank-Nicolson scheme when $\theta=1/2$.

\section{Error Estimation for Semi-discrete Schemes}
\setcounter{section}{3}\setcounter{equation}{0}
\par
\noindent
In this section, we derive \textit{a priori} error estimates for semi-discrete scheme \eqref{eq:DG-IFE_semi} - \eqref{eq:DG-IFE_semi_ic}. The error bounds are based on the following mesh dependent energy norm:
\[
\tbar v\tbar=\left(\sum\limits_{K\in\mathcal{T}_h}\int_K\beta|\nabla
v|^2d\mathbf{x}+\sum\limits_{e\in \mathcal{
E}_h}\int_{e}\frac{\sigma_e}{|e|}\jump{v}^2ds\right)^{1/2},
\]
for all $v\in\tilde H^2(\mathcal{T}_h)$. We will first recall some results from \cite{2015LinYangZhang1} for elliptic problem.

\begin{lemma}
(Trace inequalities for IFE functions) Let $\mathcal{T}_h$ be a Cartesian triangular or rectangular mesh and let $K\in\mathcal{T}_h$ be an interface triangle or rectangle with diameter $h_K$ and let $e$ be an edge of $K$. There exists a constant $C$, independent of interface location but depending on the jump of the coefficient $\beta$, such that for every linear or bilinear IFE function $v$ defined on $K$, the following inequality holds:
\begin{equation}\label{eq:trace_ife_2}
  \|\beta \nabla v\cdot \mathbf{n}_e\|_{L^2(e)}\leq
Ch_K^{-1/2}\|\sqrt{\beta}\nabla v\|_{L^2(K)}.
\end{equation}
\end{lemma}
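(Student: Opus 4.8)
The plan is to prove the trace inequality on a reference-type configuration and then scale back to the element $K$. First I would recall that a standard (scaled) trace inequality holds for polynomials: for any polynomial $p$ of fixed degree on a triangle or rectangle $\widehat{K}$ of unit diameter with edge $\widehat e$, one has $\|p\|_{L^2(\widehat e)} \le C\|p\|_{H^1(\widehat K)}$, and in fact, since $\nabla v$ is what appears on the left, I would work directly with the piecewise-polynomial gradient. The difficulty, and the reason this is not simply the classical estimate, is that an IFE function $v$ is \emph{not} a single polynomial on $K$: it is polynomial on each of $K^+$ and $K^-$ with a jump in its second derivatives across $\overline{DE}$, and — crucially — the bound must be independent of where $\Gamma$ (equivalently $\overline{DE}$) cuts $K$.

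The key steps, in order, are: (1) By an affine (for triangles) or bilinear scaling argument reduce to an element of unit diameter; track how $h_K$ enters, which produces exactly the $h_K^{-1/2}$ factor on the right and matches the $h_K^{-1}$ scaling difference between an $L^2(e)$ norm and an $L^2(K)$ norm of a gradient. (2) On the reference element, write $v = v^+ \chi_{K^+} + v^- \chi_{K^-}$ with $v^\pm$ linear or bilinear, and recall from the IFE construction in \cite{2004LiLinLinRogers,2009HeTHESIS,2008HeLinLin} that the two pieces are tied together by the interface jump conditions (continuity of $v$ along $\overline{DE}$ and continuity of $\beta\nabla v\cdot\mathbf n$ there), so that $v$ is determined by its nodal values and the coefficients of $v^\pm$ are bounded, uniformly in the interface position, by the nodal values of $v$. (3) Estimate $\|\beta\nabla v\cdot\mathbf n_e\|_{L^2(e)}$: split $e$ into the at most two sub-segments lying in $K^+$ and $K^-$ (by hypotheses (H1)–(H2) the interface meets $e$ in at most one point), apply the polynomial trace inequality to $v^+$ and $v^-$ separately on the sub-pieces, and use the coefficient bounds from step (2) together with equivalence of norms on the finite-dimensional IFE space to convert the right-hand side into $\|\sqrt\beta\,\nabla v\|_{L^2(K)}$; the coefficient $\beta$ is handled by absorbing $\max\{\beta^+,\beta^-\}/\min\{\beta^+,\beta^-\}$ into $C$, which is why $C$ is allowed to depend on the jump in $\beta$.

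The main obstacle I anticipate is step (2)–(3): ensuring the constant is genuinely independent of the interface location. A naive application of the trace inequality on the small sub-triangle/sub-quadrilateral $e\cap K^\pm$ degenerates as that piece shrinks to zero (when $\Gamma$ passes near a vertex), so one cannot argue piece-by-piece with the obvious geometric constants. The remedy is to not localize the right-hand side to $K^\pm$ but to bound $\|v^\pm\|$ on \emph{all} of the reference element — each $v^\pm$ is a polynomial defined on the whole element — and to use that the map from nodal values to the pair $(v^+,v^-)$ has norm bounded uniformly in the interface position, a fact established in the cited IFE papers (this is precisely the non-degeneracy of the IFE basis). Once that uniform bound is in hand, all remaining estimates are the standard finite-dimensional norm-equivalence and scaling arguments, and the stated inequality \eqref{eq:trace_ife_2} follows.
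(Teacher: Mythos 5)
First, a point of reference: the paper does not actually prove this lemma --- it is recalled verbatim from \cite{2015LinYangZhang1}, where the corresponding elliptic analysis is carried out, so there is no in-paper argument to compare against line by line. Your outline (scale to a reference element, use the uniform boundedness of the nodal-value-to-coefficient map for the IFE shape functions, then finite-dimensional norm equivalence) is the standard skeleton used in that literature, and the $h_K^{-1/2}$ bookkeeping in your step (1) is right.

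There is, however, a genuine soft spot in your remedy for the degenerate-interface case, and it sits exactly where the real work is. You propose to avoid the degeneracy by bounding $v^{\pm}$ over the \emph{whole} reference element rather than over $K^{\pm}$. But the right-hand side of \eqref{eq:trace_ife_2} is intrinsically localized: $\|\sqrt{\beta}\nabla v\|_{L^2(K)}^2=\beta^{+}\|\nabla v^{+}\|_{L^2(K^{+})}^2+\beta^{-}\|\nabla v^{-}\|_{L^2(K^{-})}^2$, and the $K^{+}$ contribution genuinely tends to zero as $|K^{+}|\to 0$ even though $\nabla v^{+}$ (hence $\beta\nabla v\cdot\mathbf{n}_e$ on $e\cap K^{+}$) need not. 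So ``norm equivalence on the finite-dimensional space, uniformly in the interface position'' is false as stated for this particular pair of quantities: the lower bound of the right-hand side in terms of the coefficients of $v^{+}$ degenerates. The fix used in the published proofs is different: the flux and continuity conditions across $\overline{DE}$ give an explicit linear relation $\nabla v^{+}=M\nabla v^{-}$ (for linear IFE functions; an analogous controlled relation holds in the bilinear case) with $\|M\|$ bounded independently of the interface location but depending on $\beta^{\pm}$, and one then observes that at least one of $K^{+},K^{-}$ has area $\geq|K|/2$. The entire edge contribution is estimated through the gradient on that non-degenerate piece via $M$, which is precisely where the dependence of $C$ on the jump of $\beta$ enters. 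You name the jump conditions as an ingredient in step (2), so the raw material is in your sketch, but the argument as you have arranged it in the final paragraph would not close; you need the $M$-relation plus the ``one piece is large'' observation, not a global bound on $v^{\pm}$ over all of $K$.
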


\begin{lemma} (Coercivity) There exists a constant $\kappa>0$ such that
 \begin{equation}
 a_\epsilon(v_h,v_h)\geq\kappa\tbar v_h\tbar^2,\ \ \forall~ v_h\in
 S_{h}(\mathcal{T}_h) \label{eq:coarcivity}
 \end{equation}
 holds for $\epsilon=1$ unconditionally and holds for
 $\epsilon=0$ or $\epsilon=-1$ when the penalty
 parameter $\sigma_e$ in $a_\epsilon(\cdot,\cdot)$ is large
 enough.
\end{lemma}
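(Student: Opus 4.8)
The plan is to expand $a_\epsilon(v_h,v_h)$ directly from its definition and control the only problematic contribution, namely the consistency (flux-average) term $-\sum_{e}\int_e \aver{\beta\nabla v_h\cdot\mathbf{n}_e}\jump{v_h}\,ds$ together with its symmetric partner $\epsilon\sum_{e}\int_e\aver{\beta\nabla v_h\cdot\mathbf{n}_e}\jump{v_h}\,ds$. Setting $u=v=v_h$ one gets
\[
a_\epsilon(v_h,v_h)=\sum_{K\in\mathcal{T}_h}\int_K\beta|\nabla v_h|^2\,d\mathbf{x}
+(\epsilon-1)\sum_{e\in\mathcal{E}_h}\int_e\aver{\beta\nabla v_h\cdot\mathbf{n}_e}\jump{v_h}\,ds
+\sum_{e\in\mathcal{E}_h}\int_e\frac{\sigma_e}{|e|}\jump{v_h}^2\,ds.
\]
When $\epsilon=1$ the middle term vanishes identically and the result is immediate with $\kappa=1$. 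When $\epsilon=0$ or $-1$, the middle term must be absorbed, and this is the main obstacle: the flux-average term is not sign-definite and couples the volume gradient norm to the jump norm.

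To handle the middle term I would estimate it edgewise. On an interior edge $e$ shared by $K_{e,1},K_{e,2}$, write $\aver{\beta\nabla v_h\cdot\mathbf{n}_e}=\tfrac12\big((\beta\nabla v_h\cdot\mathbf{n}_e)|_{K_{e,1}}+(\beta\nabla v_h\cdot\mathbf{n}_e)|_{K_{e,2}}\big)$, and bound each piece by the trace inequality. For a non-interface element this is the standard discrete trace inequality for (bi)linear polynomials; for an interface element it is exactly inequality \eqref{eq:trace_ife_2} of the preceding lemma, which gives $\|\beta\nabla v_h\cdot\mathbf{n}_e\|_{L^2(e)}\le C h_K^{-1/2}\|\sqrt{\beta}\nabla v_h\|_{L^2(K)}$ with $C$ independent of the interface position. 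Combining these with Cauchy--Schwarz on $e$ and then Young's inequality with a free parameter $\delta>0$, one obtains
\[
\Big|\sum_{e\in\mathcal{E}_h}\int_e\aver{\beta\nabla v_h\cdot\mathbf{n}_e}\jump{v_h}\,ds\Big|
\le \delta\sum_{K\in\mathcal{T}_h}\int_K\beta|\nabla v_h|^2\,d\mathbf{x}
+\frac{C^2}{4\delta}\sum_{e\in\mathcal{E}_h}\frac{1}{|e|}\jump{v_h}^2\,ds,
\]
where I have used $|e|\le h_K$ and the finite overlap (at most two elements share an edge, and each element has a bounded number of edges) to convert the element sum on the right back into an edge sum; the shape-regularity of the Cartesian mesh keeps all hidden constants uniform.

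Feeding this into the expansion with $|\epsilon-1|\le2$, choosing $\delta=1/4$ say, yields
\[
a_\epsilon(v_h,v_h)\ge \tfrac12\sum_{K}\int_K\beta|\nabla v_h|^2\,d\mathbf{x}
+\sum_{e}\Big(\frac{\sigma_e}{|e|}-\frac{2C^2}{|e|}\Big)\jump{v_h}^2\,ds.
\]
Hence if $\sigma_e$ is chosen larger than a fixed threshold $\sigma_0:=4C^2$ (independent of $h$ and of the interface location, depending only on the mesh shape-regularity and on the contrast in $\beta$ through the constant in \eqref{eq:trace_ife_2}), the edge coefficient is bounded below by $\sigma_e/(2|e|)$, and one concludes $a_\epsilon(v_h,v_h)\ge \tfrac12\tbar v_h\tbar^2$, i.e. $\kappa=\tfrac12$ works. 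I would remark that the only place interface elements enter is through the IFE trace inequality \eqref{eq:trace_ife_2}; everything else is the classical symmetric/non-symmetric interior penalty coercivity argument, so the proof is essentially a bookkeeping exercise once that lemma is in hand.
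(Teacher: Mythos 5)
Your argument is correct: the expansion of $a_\epsilon(v_h,v_h)$, the observation that the flux-average terms cancel exactly when $\epsilon=1$ (giving $\kappa=1$), and the absorption of the $(\epsilon-1)$ term via the IFE trace inequality, Cauchy--Schwarz, and Young's inequality with a sufficiently large penalty threshold is precisely the standard route, and the key point --- that the trace constant in \eqref{eq:trace_ife_2} is independent of the interface location, so the threshold $\sigma_0$ is uniform in $h$ --- is correctly identified. Note that the paper itself offers no proof of this lemma (it is recalled verbatim from the cited elliptic DG-IFE paper \cite{2015LinYangZhang1}), and your proof is essentially the one given there; the only blemishes are cosmetic (a stray $ds$ without an integral sign in one display, and constants that are tracked loosely through the finite-overlap step), neither of which affects the validity of the argument.
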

For every $t\in[0,T]$, we define the elliptic projection $P_hu$ of the exact solution
$u$ by
\begin{equation}
a_\epsilon(u-P_hu,v_h)=0,\ \ \forall v_h\in
S_h(\mathcal{T}_h). \label{eq:ellip_proj}
\end{equation}
It is easy to know that the solution to \eqref{eq:ellip_proj} exists and is unique. Moreover, it has the following error estimates.
\begin{lemma} (Estimate for elliptic projection) Assume that $u\in H^2(0,T;\tilde H^3(\Omega))$, for every $t\in [0,T]$, then the following error estimates hold
\begin{eqnarray}
  \tbar u-P_hu\tbar&\leq& Ch\|u\|_{\tilde H^3(\Omega)}, \label{eq:ell_proj_est_1} \\
  \tbar(u-P_hu)_t\tbar&\leq& Ch\|u_t\|_{\tilde H^3(\Omega)}, \label{eq:ell_proj_est_2}\\
  \tbar(u-P_hu)_{tt}\tbar&\leq& Ch\|u_{tt}\|_{\tilde
H^3(\Omega)}. \label{eq:ell_proj_est_3}
\end{eqnarray}
\end{lemma}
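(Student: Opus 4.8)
The plan is to estimate the three quantities in a uniform way, using the fact that the elliptic projection error $u - P_h u$ satisfies, by definition \eqref{eq:ellip_proj}, a Galerkin orthogonality relation in the bilinear form $a_\epsilon$. The starting point is the already-established analysis of the elliptic DG-IFE scheme in \cite{2015LinYangZhang1}: for a fixed $t$, the elliptic interface problem with right-hand side and data coming from $u(\cdot,t)$ has a unique DG-IFE approximation, and the corresponding energy-norm error bound reads $\tbar u - P_h u\tbar \le Ch\|u\|_{\tilde H^3(\Omega)}$. This gives \eqref{eq:ell_proj_est_1} immediately, provided we know $u(\cdot,t)\in\tilde H^3(\Omega)$ for every $t$, which follows from the hypothesis $u\in H^2(0,T;\tilde H^3(\Omega))$ and a Sobolev embedding in time ($H^2(0,T)\hookrightarrow C^1[0,T]$), so in particular $u, u_t \in C([0,T];\tilde H^3(\Omega))$ and $u_{tt}\in L^2(0,T;\tilde H^3(\Omega))$.

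The main step is to handle the time derivatives of the projection error. The key observation is that the elliptic projection commutes with differentiation in $t$: since $\beta$ is time-independent, the bilinear form $a_\epsilon$ does not depend on $t$, so differentiating \eqref{eq:ellip_proj} with respect to $t$ gives
\[
a_\epsilon(u_t - (P_h u)_t, v_h) = 0, \qquad \forall v_h\in S_h(\mathcal{T}_h),
\]
which says precisely that $(P_h u)_t = P_h(u_t)$. One must justify this differentiation under the integral/bilinear-form sign; this is where the regularity $u\in H^2(0,T;\tilde H^3(\Omega))$ is used, together with the boundedness of $a_\epsilon$ on $H_h\times H_h$ (continuity of the bilinear form, which also comes from \cite{2015LinYangZhang1} via the trace inequalities, including Lemma~3.1), so that difference quotients in $t$ converge and the limit can be passed inside. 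Once $(P_h u)_t = P_h u_t$ is known, \eqref{eq:ell_proj_est_2} is just \eqref{eq:ell_proj_est_1} applied to $u_t$ in place of $u$. Differentiating once more yields $(P_h u)_{tt} = P_h u_{tt}$ and hence \eqref{eq:ell_proj_est_3} in the same way.

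The hard part is the justification of differentiating the defining identity \eqref{eq:ellip_proj} in $t$, i.e.\ showing that the map $t\mapsto P_h u(\cdot,t)$ inherits the time regularity of $t\mapsto u(\cdot,t)$ and that its derivative is the projection of the derivative. Concretely, I would form the difference quotient $\delta^h_\tau P_h u := \tau^{-1}(P_h u(\cdot,t+\tau) - P_h u(\cdot,t))$, note by linearity of the projection that $\delta^h_\tau P_h u = P_h(\delta^h_\tau u)$, then use the energy estimate (Lemma~3.2 coercivity together with continuity of $a_\epsilon$) to bound $\tbar \delta^h_\tau P_h u - P_h u_t\tbar \le C\|\delta^h_\tau u - u_t\|_{\tilde H^3(\Omega)} \to 0$ as $\tau\to 0$, the convergence of the difference quotient being a consequence of $u\in H^2(0,T;\tilde H^3(\Omega))$. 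Everything else — the $O(h)$ rate, independence of the constant from the interface location — is inherited verbatim from the elliptic estimates recalled from \cite{2015LinYangZhang1}, so no new interpolation-error analysis is needed here.
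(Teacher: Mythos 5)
Your proposal is correct and follows essentially the same route as the paper: invoke the elliptic DG-IFE energy-norm estimate from \cite{2015LinYangZhang1} for \eqref{eq:ell_proj_est_1}, differentiate the defining identity \eqref{eq:ellip_proj} in $t$ (legitimate because $\beta$ and hence $a_\epsilon$ are time-independent) to conclude $(P_hu)_t=P_hu_t$, and then apply \eqref{eq:ell_proj_est_1} to $u_t$ and $u_{tt}$. Your additional difference-quotient justification of the commutation step is a welcome extra rigor that the paper omits, but it does not change the argument.
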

\begin{proof}
The estimate \eqref{eq:ell_proj_est_1} follows from the estimate derived for the DG-IFE methods for elliptic problems in \cite{2015LinYangZhang1}. Taking the time derivative of \eqref{eq:ellip_proj} we have
\[
0=\frac{d}{dt}a_\epsilon(u-P_hu,v_h)=a_\epsilon\left(u_t-(P_h
u)_t,v_h \right),\ \
\forall v_h\in S_h(\mathcal{T}_h),
\]
which implies  that $(P_hu)_t=P_hu_t$. Thus, for any $t\in [0,T]$, $u_t(\cdot,t)\in\tilde H^3(\Omega)$. Applying the estimate \eqref{eq:ell_proj_est_1} to $u_t$, we get
\[
\tbar(u-P_hu)_{t}\tbar=\tbar u_t-P_hu_t\tbar\leq Ch\|u_{t}\|_{\tilde
H^3(\Omega)}.
\]
This concludes the estimate \eqref{eq:ell_proj_est_2}. Following a similar argument, we can obtain \eqref{eq:ell_proj_est_3}.
\end{proof}

Now we are ready to derive an \textit{a priori} error estimate for the semi-discrete IFE scheme \eqref{eq:DG-IFE_semi} - \eqref{eq:DG-IFE_semi_ic}. First, we write
\begin{equation}\label{eq: xieta}
  u_h - u = (u_h - P_hu) + (P_hu - u) \triangleq \xi + \eta,
\end{equation}
where $P_hu$ is the elliptic projection of $u$ defined by \eqref{eq:ellip_proj}. From \eqref{eq:ell_proj_est_1}, we can bound $\tbar \eta\tbar$ as follows
 \begin{eqnarray}
 \tbar\eta\tbar&\leq& Ch\|u(\cdot,t)\|_{\tilde H^3(\Omega)}\leq Ch\Big(\|u_0\|_{\tilde H^3(\Omega)}+\|u_t\|_{L^2(0,T;\tilde H^3(\Omega)}\Big). \label{eq:semi_proof_1}
 \end{eqnarray}
It suffices to bound $\tbar\xi\tbar$. From \eqref{eq:weak_form}, \eqref{eq:DG-IFE_semi} and \eqref{eq:coarcivity}, we get the error equation for $\xi$
\begin{equation}
 \left(\xi_t, v_h\right)+a_\epsilon(\xi,v_h)=\left(\eta_t, v_h\right),\ \ \forall
 v_h\in S_{h}(\mathcal{T}_h). \label{eq:semi_proof_2}
 \end{equation}
Let $v_h=\xi_t$, then \eqref{eq:semi_proof_2} becomes
\begin{equation}
 \|\xi_t\|^2+a_\epsilon(\xi,\xi_t)
 =\left(\eta_t, \xi_t\right). \label{eq:semi_proof_3}
 \end{equation}
To proceed the analysis, we discuss the symmetric and nonsymmetric cases separately.\\

 \textbf{(i)}~If $\epsilon=-1$, then $a_\epsilon(\cdot,\cdot)$ is symmetric, and
\begin{equation}
 \left\|\xi_t\right\|^2+\frac{1}{2}\frac{d}{dt}a_\epsilon(\xi,\xi)
 \leq\|\eta_t\|\left\|\xi_t\right\|\leq \frac{1}{2}\left\|\eta_t\right\|^2+\frac{1}{2}\left\|\xi_t\right\|^2. \label{eq:semi_proof_4}
 \end{equation}
Note that $u_{h0} = P_hu_0$, thus $\xi(\cdot,0)=0$. We integrate both sides of \eqref{eq:semi_proof_4} from 0 to $t$ to obtain
\begin{equation}
\frac{1}{2}\int_0^t\left\|\xi_t\right\|^2d\tau+\frac{1}{2}a_\epsilon(\xi(\cdot,t),\xi(\cdot,t))
 \leq \frac{1}{2}\int_0^t\left\|\eta_t\right\|^2d\tau\leq Ch^2\int_0^T\left\|u_t\right\|_{\tilde H^3(\Omega)}^2dt. \label{eq:semi_proof_5}
\end{equation}
The second inequality in \eqref{eq:semi_proof_5} can be obtained from \eqref{eq:ell_proj_est_2}.
The coercivity of $a_\epsilon(\cdot,\cdot)$ leads to
\begin{equation}
\|\xi_t\|_{L^2(0,t;L^2(\Omega))}+\tbar\xi\tbar\leq Ch\|u_t\|_{L^2(0,T;\tilde H^3(\Omega))}. \label{eq:semi_proof_6}
\end{equation}
Dropping the first term in \eqref{eq:semi_proof_6} leads to a bound for $\tbar \xi\tbar$.\\

 \textbf{(ii)}~ If $\epsilon=1$ or $0$, then
$a_\epsilon(\cdot,\cdot)$ is nonsymmetric. We have
\begin{equation}\label{eq:semi_proof_7}
  a_\epsilon\left(\xi,\xi_t\right)=\frac{1}{2}\frac{d}{dt}a_\epsilon(\xi,\xi)
+\frac{1}{2}\left(a_\epsilon\left(\xi,\xi_t\right)-a_\epsilon\left(\xi_t,
\xi\right)\right) \geq\frac{1}{2}\frac{d}{dt}a_\epsilon(\xi,\xi)-\frac{C}{2}\left\tbar\xi_t\right\tbar^2-\frac{C}{2}\tbar\xi\tbar^2.
\end{equation}
Substituting \eqref{eq:semi_proof_7} into \eqref{eq:semi_proof_3} and integrating it from 0 to $t$, we have
\begin{equation}
 \int_0^t\|\xi_t\|^2d\tau+\tbar\xi\tbar^2
 \leq
 C\int_0^t(\|\eta_t\|^2+\tbar\xi_t\tbar^2+\tbar\xi\tbar^2)d\tau. \label{eq:semi_proof_8}
\end{equation}
Taking derivative of \eqref{eq:semi_proof_2} with respect to $t$ leads to
\begin{equation}
 \left(\xi_{tt}, v_h\right)+a_\epsilon(\xi_t,v_h)=\left(\eta_{tt}, v_h\right),~~~ \forall
 v_h\in S_{h}(\mathcal{T}_h). \label{eq:semi_proof_9}
 \end{equation}
Choosing $v_h=\xi_t$ in \eqref{eq:semi_proof_9} and using the coercivity of
$a_\epsilon(\cdot,\cdot)$, we get
\begin{eqnarray*}
 \frac{1}{2}\frac{d}{dt}\|\xi_{t}\|^2+\kappa \tbar\xi_t\tbar^2\leq
 \frac{1}{2}(\|\eta_{tt}\|^2+\|\xi_t\|^2).
\end{eqnarray*}
Integrating from 0 to $t$ and using the Gronwall inequality, we have
\begin{equation}
\int_0^t\tbar\xi_t\tbar^2d\tau\leq C\int_0^t\|\eta_{tt}\|^2d\tau+C\|\xi_t(\cdot,0)\|^2. \label{eq:semi_proof_10}
\end{equation}
Set $t=0$ and $v_h=\xi_t(\cdot,0)$ in \eqref{eq:semi_proof_2}, then we obtain
\begin{equation}
\|\xi_t(\cdot,0)\|\leq \|\eta_t(\cdot,0)\|. \label{eq:semi_proof_11}
\end{equation}
Substitute \eqref{eq:semi_proof_10} and \eqref{eq:semi_proof_11} into \eqref{eq:semi_proof_8} and apply the Gronwall inequality, then
\[
\int_0^t\|\xi_t\|^2d\tau+\tbar\xi\tbar^2
\leq C\int_0^t(\|\eta_t\|^2+\|\eta_{tt}\|^2)d\tau+C\|\eta_t(\cdot,0)\|^2.
\]
Applying the estimates \eqref{eq:ell_proj_est_2} and \eqref{eq:ell_proj_est_3} to the right hand side of the above inequality gives
\begin{equation}
\left\|\xi_t\right\|_{L^2(0,t;L^2(\Omega))}+\tbar\xi\tbar\leq Ch\Big(\|u_t(\cdot,0)\|_{\tilde
 H^3(\Omega))}+\|u_{t}\|_{L^2(0,T;\tilde
 H^3(\Omega))}+\|u_{tt}\|_{L^2(0,T;\tilde
 H^3(\Omega))}\Big). \label{eq:semi_proof_12}
\end{equation}
Again, dropping the first term leads to a bound for $\tbar\xi\tbar$. We summarize the above discussion in the following theorem.

\begin{theorem}
 Assume that the exact solution $u$ of problem \eqref{eq:parab_eq}-\eqref{eq:parab_eq_jump_2} satisfies $u\in H^1(0,T;\tilde H^3(\Omega))$ for $\epsilon=-1$ and
 $u\in H^2(0,T;\tilde H^3(\Omega))$ for $\epsilon=0, 1$, and $u_0\in\tilde H^3(\Omega)$.
 Let $u_h$ be the DG-IFE solution of \eqref{eq:DG-IFE_semi}-\eqref{eq:DG-IFE_semi_ic} and let $u_h(\cdot,0) = P_h u_0$ be the elliptic projection of $u_0$. Then there exists a constant $C$ such that for all $t\in [0,T]$
 \begin{equation}
\tbar u_h(\cdot,t)-u(\cdot,t)\tbar\leq Ch\Big(\|u_0\|_{\tilde H^3(\Omega)}+\|u_t\|_{L^2(0,T;\tilde H^3(\Omega))}\Big)
\label{eq:semi_est_1}
 \end{equation}
 for $\epsilon=-1$, and
  \begin{equation}
  \tbar u_h(\cdot,t)-u(\cdot,t)\tbar\leq Ch\Big(\|u_0\|_{\tilde H^3(\Omega)}+\|u_t(0)\|_{\tilde H^3(\Omega)}+\|u_t\|_{L^2(0,T;\tilde H^3(\Omega))}+\|u_{tt}\|_{L^2(0,T;\tilde H^3(\Omega))}\Big) \label{eq:semi_est_2}
 \end{equation}
 for $\epsilon=0$ or 1.
 \end{theorem}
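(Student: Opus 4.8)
The plan is to carry out a standard energy argument based on an elliptic-projection splitting of the error, treating the symmetric and nonsymmetric interior-penalty forms separately. First I would write $u_h-u=\xi+\eta$ with $\xi=u_h-P_hu\in S_h(\mathcal{T}_h)$ and $\eta=P_hu-u$, where $P_hu$ is the time-dependent elliptic projection defined by \eqref{eq:ellip_proj}. The projection part is immediate: the estimate \eqref{eq:ell_proj_est_1} gives $\tbar\eta\tbar\le Ch\|u(\cdot,t)\|_{\tilde H^3(\Omega)}$ for each $t$, and since $u(\cdot,t)=u_0+\int_0^t u_t(\cdot,\tau)\,d\tau$ one obtains $\tbar\eta\tbar\le Ch(\|u_0\|_{\tilde H^3(\Omega)}+\|u_t\|_{L^2(0,T;\tilde H^3(\Omega))})$, which is already of the required form. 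Hence the whole problem reduces to bounding $\tbar\xi\tbar$, after which the triangle inequality $\tbar u_h-u\tbar\le\tbar\xi\tbar+\tbar\eta\tbar$ finishes the proof.

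To estimate $\xi$, I would subtract the semi-discrete scheme \eqref{eq:DG-IFE_semi} from the weak form \eqref{eq:weak_form} and use the Galerkin orthogonality $a_\epsilon(\eta,v_h)=0$ built into \eqref{eq:ellip_proj}, obtaining the error equation $(\xi_t,v_h)+a_\epsilon(\xi,v_h)=(\eta_t,v_h)$ for all $v_h\in S_h(\mathcal{T}_h)$; testing with $v_h=\xi_t$ gives $\|\xi_t\|^2+a_\epsilon(\xi,\xi_t)=(\eta_t,\xi_t)$. If $\epsilon=-1$, then $a_\epsilon$ is symmetric, so $a_\epsilon(\xi,\xi_t)=\frac{1}{2}\frac{d}{dt}a_\epsilon(\xi,\xi)$; applying Cauchy--Schwarz and Young's inequality to absorb $\frac{1}{2}\|\xi_t\|^2$ on the left, integrating in time from $0$ to $t$ (using $\xi(\cdot,0)=0$ since $u_h(\cdot,0)=P_hu_0$), and then invoking coercivity \eqref{eq:coarcivity} together with the projection bound \eqref{eq:ell_proj_est_2} yields $\tbar\xi\tbar\le Ch\|u_t\|_{L^2(0,T;\tilde H^3(\Omega))}$, which gives \eqref{eq:semi_est_1}.

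The main obstacle is the nonsymmetric case $\epsilon\in\{0,1\}$, where $a_\epsilon(\xi,\xi_t)$ is no longer an exact time derivative: writing $a_\epsilon(\xi,\xi_t)=\frac{1}{2}\frac{d}{dt}a_\epsilon(\xi,\xi)+\frac{1}{2}\big(a_\epsilon(\xi,\xi_t)-a_\epsilon(\xi_t,\xi)\big)$, the antisymmetric remainder can only be controlled by $C(\tbar\xi_t\tbar^2+\tbar\xi\tbar^2)$, using the trace inequality \eqref{eq:trace_ife_2} and the boundedness of $a_\epsilon$, which pushes $\int_0^t\tbar\xi_t\tbar^2\,d\tau$ onto the right-hand side of the integrated energy inequality. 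To close the estimate I would differentiate the error equation in time, obtaining $(\xi_{tt},v_h)+a_\epsilon(\xi_t,v_h)=(\eta_{tt},v_h)$, test with $v_h=\xi_t$, use coercivity, integrate, and apply Gronwall to get $\int_0^t\tbar\xi_t\tbar^2\,d\tau\le C\int_0^t\|\eta_{tt}\|^2\,d\tau+C\|\xi_t(\cdot,0)\|^2$; the initial value is handled by setting $t=0$ and $v_h=\xi_t(\cdot,0)$ in the error equation, which gives $\|\xi_t(\cdot,0)\|\le\|\eta_t(\cdot,0)\|$. Substituting these back and applying Gronwall once more, together with \eqref{eq:ell_proj_est_2} and \eqref{eq:ell_proj_est_3}, produces $\tbar\xi\tbar\le Ch\big(\|u_t(\cdot,0)\|_{\tilde H^3(\Omega)}+\|u_t\|_{L^2(0,T;\tilde H^3(\Omega))}+\|u_{tt}\|_{L^2(0,T;\tilde H^3(\Omega))}\big)$, and combining with the $\eta$-estimate gives \eqref{eq:semi_est_2}. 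The hypothesis $u\in H^2(0,T;\tilde H^3(\Omega))$ for $\epsilon=0,1$ is exactly what makes the $\|\xi_t(\cdot,0)\|$ and $\int_0^t\|\eta_{tt}\|^2\,d\tau$ terms meaningful; I expect the only delicate point to be the bookkeeping in this last step — tracking which terms are already $O(h)$ small and which must be absorbed before the final Gronwall application.
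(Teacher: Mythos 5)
Your proposal follows essentially the same route as the paper's own argument: the elliptic-projection splitting $u_h-u=\xi+\eta$, the error equation tested with $\xi_t$, the exact time-derivative identity in the symmetric case, and, for $\epsilon=0,1$, the antisymmetric-remainder bound followed by differentiating the error equation in time, the bound $\|\xi_t(\cdot,0)\|\le\|\eta_t(\cdot,0)\|$, and a double application of Gronwall. The proof is correct and matches the paper step for step.
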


\section{Error Estimation for Fully Discrete Schemes}
\noindent
Now we derive error estimates for the fully discrete DG-IFE schemes \eqref{eq:DG-IFE_full_disc} - \eqref{eq:DG-IFE_full_disc_ic}. We will consider two prototypical cases.

\noindent
\subsection{Backward Euler scheme}
 The backward Euler scheme corresponds to \eqref{eq:DG-IFE_full_disc} with $\theta=1$.
Subtracting \eqref{eq:weak_form} from \eqref{eq:DG-IFE_full_disc}, we can write the error equation as follows
\begin{eqnarray}
&&\left(\partial_t \xi^n,
v_h\right)+a_\epsilon(\xi^{n},v_h)=\left(\partial_t \eta^n,
v_h\right)+(r^n, v_h),\ \ \forall v_h\in
S_{h}(\mathcal{T}_h), \label{eq:Euler_1}
\end{eqnarray}
where $r^n=-(u_t^n-\partial_t u^n )$. Let $v_h=\partial_t\xi^n$ in \eqref{eq:Euler_1}, we obtain
\begin{equation}\label{eq:Euler_2}
  \|\partial_t\xi^n\|^2+ a_\epsilon(\xi^{n},\partial_t\xi^n)\leq\|\partial_t \eta^n\|^2+\|r^n\|^2+\frac{1}{2}\|\partial_t\xi^n\|^2.
\end{equation}
Again, the discussion for the second term is different for symmetric and nonsymmetric bilinear forms. We proceed in the following two cases.

\textbf{(i)} $\epsilon=-1$. The bilinear $a_\epsilon(\cdot,\cdot)$ is symmetric.
\begin{eqnarray*}
a_\epsilon(\xi^{n},\partial_t\xi^n)&=&\frac{1}{\Delta t}a_\epsilon(\xi^n,\xi^n-\xi^{n-1})\\
&=&\frac{1}{2\Delta t}\Big(a_\epsilon(\xi^n,\xi^n)-a_\epsilon(\xi^{n-1},\xi^{n-1})+a_\epsilon(\xi^n-\xi^{n-1},\xi^n-\xi^{n-1})\Big)\\
&\geq&\frac{1}{2\Delta t}\Big(a_\epsilon(\xi^n,\xi^n)-a_\epsilon(\xi^{n-1},\xi^{n-1})\Big).
\end{eqnarray*}
Then we substitute it into \eqref{eq:Euler_2} to get
\begin{eqnarray}
&&\frac{1}{2}\|\partial_t\xi^n\|^2+
\frac{1}{2\Delta t}\Big(a_\epsilon(\xi^n,\xi^{n})-a_\epsilon(\xi^{n-1},\xi^{n-1})\Big)\leq\|\partial_t \eta^n\|^2+\|r^n\|^2. \label{eq:Euler_3}
\end{eqnarray}
Multiplying \eqref{eq:Euler_3} by $2\Delta t$ and then summing over $n$ from $1$ to any positive integer $k$, we have
\begin{eqnarray}
&&\Delta t\sum\limits_{n=1}^{k}\|\partial_t\xi^n\|^2+
a_\epsilon(\xi^k,\xi^{k})\leq2\Delta t\sum\limits_{n=1}^k\Big(\|\partial_t \eta^n\|^2+\|r^n\|^2\Big). \label{eq:Euler_4}
\end{eqnarray}
Now we bound two terms on the right hand side of \eqref{eq:Euler_4}. By H\"{o}lder's inequality and \eqref{eq:ell_proj_est_2},
\begin{equation}\label{eq:Euler_5}
  \|\partial_t \eta^n\|^2= \int_{\Omega}\Big(\frac{1}{\Delta t}\int_{t^{n-1}}^{t^n}\eta_td\tau\Big)^2d\mathbf{x}
\leq\frac{1}{\Delta t}\int_{t^{n-1}}^{t^n}\|\eta_t\|^2d\tau\leq C\frac{h^2}{\Delta t}\int_{t^{n-1}}^{t^n}\|u_t\|_{\tilde H^3(\Omega)}^2d\tau,
\end{equation}
 \begin{equation}\label{eq:Euler_6}
 \|r^n\|^2=\int_{\Omega}|u_t^n-\partial_t u^n|^2d\mathbf{x}=\int_{\Omega}\left|\frac{1}{\Delta t}\int_{t^{n-1}}^{t^n}(t-t^{n-1})u_{tt}dt\right|^2d\mathbf{x}
\leq \frac{\Delta t}{3}\int_{t^{n-1}}^{t^n}\|u_{tt}\|^2d\tau.
 \end{equation}
Hence, by the coercivity of $a_\epsilon(\cdot,\cdot)$, we obtain
 \begin{eqnarray}
&&\Delta t\sum\limits_{n=1}^{k}\|\partial_t\xi^n\|^2+
\tbar\xi^k\tbar^2\leq C\Big(h^2\|u_t\|_{L^2(0,T;\tilde H^3(\Omega))}^2+(\Delta t)^2\|u_{tt}\|_{L^2(0,T;L^2(\Omega))}^2\Big). \label{eq:Euler_7}
\end{eqnarray}
Substituting \eqref{eq:semi_proof_1} and \eqref{eq:Euler_7} to $u_h^k-u^k= \xi^k+\eta^k$, and applying the triangle inequality yields
\begin{equation}
\tbar u_h^k-u^k\tbar\leq C\Big(h\big(\|u_0\|_{\tilde H^3(\Omega)}+\|u_t\|_{L^2(0,T;\tilde H^3(\Omega))}\big)+\Delta t\|u_{tt}\|_{L^2(0,T;L^2(\Omega))}\Big).
\label{eq:Euler_8}
\end{equation}
\par
\textbf{(ii)}~$\epsilon=0$ or 1. The bilinear form is nonsymmetric.
\begin{eqnarray*}
a_\epsilon(\xi^{n},\partial_t\xi^n)&=&\frac{1}{2\Delta t}\Big(a_\epsilon(\xi^n,\xi^n)-a_\epsilon(\xi^{n-1},\xi^{n-1})\Big)+\frac{1}{2\Delta t}a_\epsilon(\xi^n,\xi^n-\xi^{n-1})-\frac{1}{2\Delta t}a_\epsilon(\xi^n-\xi^{n-1},\xi^{n-1})\\
&=&\frac{1}{2\Delta t}\Big(a_\epsilon(\xi^n,\xi^n)-a_\epsilon(\xi^{n-1},\xi^{n-1})\Big)+\frac{1}{2}\Big(a_\epsilon(\partial_t\xi^n,\xi^{n})-a_\epsilon(\xi^{n-1},\partial_t\xi^{n})\Big)\\
&\geq&\frac{1}{2\Delta t}\Big(a_\epsilon(\xi^n,\xi^n)-a_\epsilon(\xi^{n-1},\xi^{n-1})\Big)-C\Big(\tbar\partial_t\xi^n\tbar^2+\tbar\xi^{n-1}\tbar^2+\tbar\xi^{n}\tbar^2\Big).
\end{eqnarray*}
Substituting it into \eqref{eq:Euler_2} leads to
\begin{eqnarray}
&&\frac{1}{2}\|\partial_t\xi^n\|^2+
\frac{1}{2\Delta t}\Big(a_\epsilon(\xi^n,\xi^{n})-a_\epsilon(\xi^{n-1},\xi^{n-1})\Big) \nonumber \\
&\leq&\|\partial_t \eta^n\|^2
+\|r^n\|^2+C\Big(\tbar\partial_t\xi^n\tbar^2+\tbar\xi^{n-1}\tbar^2+\tbar\xi^{n}\tbar^2\Big). \label{eq:Euler_9}
\end{eqnarray}
Multiplying \eqref{eq:Euler_9} by $2\Delta t$ and then summing over $n$, we obtain
\begin{equation}\label{eq:Euler_10}
\Delta t\sum\limits_{n=1}^k\|\partial_t\xi^n\|^2+
\kappa\tbar\xi^k\tbar^2\leq2\Delta t\sum\limits_{n=1}^k(\|\partial_t \eta^n\|^2
+\|r^n\|^2)+C\Delta t\sum\limits_{n=1}^k\tbar\partial_t\xi^n\tbar^2+C\Delta t\sum\limits_{n=1}^k\tbar\xi^{n}\tbar^2.
\end{equation}
From \eqref{eq:Euler_1} we have
\begin{eqnarray}
&&\frac{1}{\Delta t}\left(\partial_t \xi^n-\partial_t\xi^{n-1},
v_h\right)+a_\epsilon(v_h,\partial_t\xi^{n})=\left(\partial_{tt} \eta^n,
v_h\right)+(\partial_t r^n, v_h),\ \ \forall v_h\in
S_{h}(\mathcal{T}_h). \label{eq:Euler_11}
\end{eqnarray}
Let $v_h=\partial_t \xi^n$ in \eqref{eq:Euler_11}. Then
\begin{eqnarray*}
&&\frac{1}{2\Delta t}\left(\|\partial_t \xi^n\|^2-\|\partial_t\xi^{n-1}\|^2
\right)+\kappa\tbar\partial_t\xi^{n}\tbar^2\leq (\|\partial_{tt} \eta^n\|
+\|\partial_t r^n\|)\|\partial_t \xi^n\|.
\end{eqnarray*}
Multiplying the equation by $2\Delta t$, and taking summation over $n$, we obtain
\begin{eqnarray}
&&\|\partial_t \xi^k\|^2
+\Delta t\sum\limits_{n=2}^k\tbar\partial_t\xi^{n}\tbar^2\leq C\Delta t\sum\limits_{n=2}^k(\|\partial_{tt} \eta^n\|^2
+\|\partial_t r^n\|^2)+C\|\partial_t \xi^1\|^2. \label{eq:Euler_12}
\end{eqnarray}
Set $n=1$ and $v_h=\partial_t \xi^1=\xi^1/\Delta t$ in \eqref{eq:Euler_1}, then we have
\begin{eqnarray*}
&&\|\partial_t \xi^1\|^2+
\frac{1}{\Delta t}a_\epsilon(\xi^1,\xi^{1})\leq(\|\partial_t \eta^1\|+\|r^1\|)\|\partial_t \xi^1\|.
\end{eqnarray*}
Applying the coercivity of $a(\cdot,\cdot)$ and Young's inequality, we have
\begin{eqnarray*}
&&\|\partial_t \xi^1\|^2+\frac{1}{\Delta t}\tbar \xi^1\tbar^2\leq C(\|\partial_t \eta^1\|^2+\|r^1\|^2).
\end{eqnarray*}
Note that $\Delta t\tbar\partial_t\xi^{1}\tbar^2=\tbar\xi^{1}\tbar^2/\Delta t$. Substituting the above inequality into \eqref{eq:Euler_12} we have
\begin{eqnarray}
&&\sum\limits_{n=1}^k\Delta t\tbar\partial_t\xi^{n}\tbar^2\leq C\sum\limits_{n=2}^k\Delta t(\|\partial_{tt} \eta^n\|^2
+\|\partial_t r^n\|^2)+C(\|\partial_t \eta^1\|^2+\|r^1\|^2). \label{eq:Euler_13}
\end{eqnarray}
Substituting \eqref{eq:Euler_13} in \eqref{eq:Euler_10}, and applying the Gronwall inequality, we obtain
\begin{equation}\label{eq:Euler_14}
\tbar\xi^k\tbar^2\leq\sum\limits_{n=1}^k\Delta t(\|\partial_t \eta^n\|^2
+\|r^n\|^2)+C\sum\limits_{n=2}^k\Delta t(\|\partial_{tt} \eta^n\|^2
+\|\partial_t r^n\|^2)+C(\|\partial_t \eta^1\|^2+\|r^1\|^2).
\end{equation}
Now we bound the last four right-hand side terms in \eqref{eq:Euler_14}. First
\begin{eqnarray*}\nonumber
\|\partial_{tt} \eta^n\|^2&=&\int_{\Omega}\left(\frac{\eta^n-2\eta^{n-1}+\eta^{n-2}}{(\Delta t)^2}\right)^2d\mathbf{x}\\
&=&\int_{\Omega}\left(\frac{1}{(\Delta t)^2}\int_{t^{n-1}}^{t^n}\eta_{tt}(t^n-t)dt-\frac{1}{(\Delta t)^2}\int_{t^{n-2}}^{t^{n-1}}\eta_{tt}(t^{n-1}-t)dt\right)^2d\mathbf{x}\\
&\leq&\frac{1}{3\Delta t}\int_{t^{n-2}}^{t^n}\|\eta_{tt}\|^2dt.
\end{eqnarray*}
By \eqref{eq:ell_proj_est_3}, we have
\begin{eqnarray}
\Delta t\sum\limits_{n=2}^k\|\partial_{tt} \eta^n\|^2\leq Ch^2\|u_{tt}\|^2_{L^2(0,T;\tilde H^3(\Omega))}. \label{eq:Euler_15}
\end{eqnarray}
For the second term,
\begin{eqnarray*}\nonumber
\partial_{t}r^n&=&\frac{u_t^n-u_t^{n-1}}{\Delta t}-\frac{u^n-2u^{n-1}+u^{n-2}}{(\Delta t)^2}\\
&=&\int_{t^{n-1}}^{t^n}u_{ttt}dt-\frac{1}{(\Delta
t)^2}\int_{t^{n-1}}^{t^n}u_{ttt}(t^{n-1}-t)^2dt+\frac{1}{(\Delta
t)^2}\int_{t^{n-2}}^{t^{n-1}}u_{ttt}(t-t^{n-2})^2dt.
\end{eqnarray*}
Applying H\"{o}lder's inequality, we get
\begin{eqnarray}
\sum\limits_{n=2}^k\Delta t\|\partial_t r^n\|^2&\leq& (\Delta
t)^2\sum\limits_{n=2}^k\left(\int_{t^{n-1}}^{t^n}\|u_{ttt}\|^2dt+\frac{1}{5}\int_{t^{n-1}}^{t^n}\|u_{ttt}\|^2dt
+\frac{1}{5}\int_{t^{n-2}}^{t^{n-1}}\|u_{ttt}\|^2dt\right) \nonumber\\
&\leq& C(\Delta t)^2\|u_{ttt}\|^2_{L^2(0,T;L^2(\Omega))}. \label{eq:Euler_16}
\end{eqnarray}
As for the last two terms, we have
\begin{eqnarray}
\|\partial_t \eta^1\|^2\leq \frac{1}{\Delta t}\int_0^{\Delta
t}\|\eta_t\|^2dt\leq h^2\left(\frac{1}{\Delta t}\int_0^{\Delta
t}\|u_t\|_{\tilde H^3(\Omega)}^2dt\right) \label{eq:Euler_17}
\end{eqnarray}
and
 \begin{eqnarray}
 \|r^1\|^2=\int_{\Omega}|u_t^1-\partial_t u^1|^2d\mathbf{x}\leq \frac{(\Delta t)^2}{3}\left(\frac{1}{\Delta t}\int_{0}^{\Delta t}\|u_{tt}\|^2dt\right).
 \label{eq:Euler_18}
 \end{eqnarray}
 Now, substituting \eqref{eq:Euler_5}, \eqref{eq:Euler_6} and \eqref{eq:Euler_15}-\eqref{eq:Euler_18} into \eqref{eq:Euler_14}, we
 obtain
\begin{eqnarray*}\nonumber
\tbar\xi^k\tbar^2&\leq& Ch^2\left(\|u_t\|_{L^2(0,T;\tilde
H^3(\Omega))}^2+\|u_{tt}\|^2_{L^2(0,T;\tilde
H^3(\Omega))}+\frac{1}{\Delta t}\int_0^{\Delta t}\|u_t\|_{\tilde
H^3(\Omega)}^2dt\right)\\
&&+C(\Delta t)^2\left(\|u_{tt}\|_{L^2(0,T;L^2(\Omega))}^2+\|u_{ttt}\|^2_{L^2(0,T;L^2(\Omega))}+\frac{1}{\Delta
t}\int_{0}^{\Delta t}\|u_{tt}\|^2dt\right).
\end{eqnarray*}
Now, we summarize all the analysis above for the backward Euler DG-IFE method in the following theorem.
\begin{theorem}
Assume the exact solution $u$ of \eqref{eq:parab_eq}-\eqref{eq:parab_eq_jump_2} satisfies $u\in H^2(0,T;\tilde H^3(\Omega))\cap H^3(0,T;L^2(\Omega))$ and
 $u_0\in \tilde H^3(\Omega)$.
 Let the sequence $\left\{u_h^n\right\}_{n=0}^{N_t}$ be the solution of
the backward Euler scheme \eqref{eq:DG-IFE_full_disc}-\eqref{eq:DG-IFE_full_disc_ic}. Then, we have the following estimates satisfied for all $0\leq n\leq N_t$
\begin{description}
  \item[(1)] If $\epsilon=-1$, then there exists a positive constant $C$ independent of $h$ and $\Delta t$ such that
   \begin{equation}
\tbar u_h^n -u^n\tbar\leq C\Big(h\big(\|u_0\|_{\tilde H^3(\Omega)}+\|u_t\|_{L^2(0,T;\tilde H^3(\Omega))}\big)+\Delta t\|u_{tt}\|_{L^2(0,T;L^2(\Omega))}\Big). \label{eq:thm3.2-1}
\end{equation}
  \item[(2)] If $\epsilon=0$ or 1, then there exists a positive constant $C$ independent of $h$ and $\Delta t$ such that
  \begin{eqnarray}\nonumber
\tbar u_h^n -u^n\tbar
&\leq& Ch\left(\|u_0\|_{\tilde
H^3(\Omega)}+\|u_t\|_{L^2(0,T;\tilde
H^3(\Omega))}+\|u_{tt}\|_{L^2(0,T;\tilde
H^3(\Omega))}+\left(\frac{1}{\Delta t}\int_0^{\Delta
t}\|u_t\|_{\tilde
H^3(\Omega)}^2dt\right)^{1/2}\right)\\
&&+C\Delta t\left(\|u_{tt}\|_{L^2(0,T;L^2(\Omega))}+\|u_{ttt}\|_{L^2(0,T;L^2(\Omega))}+\left(\frac{1}{\Delta
t}\int_{0}^{\Delta t}\|u_{tt}\|^2dt\right)^{1/2}\right). \label{eq:thm3.2-2}
\end{eqnarray}
\end{description}
\end{theorem}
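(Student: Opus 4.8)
The plan is to decompose the error as $u_h^n - u^n = \xi^n + \eta^n$ with $\xi^n = u_h^n - P_h u^n$ and $\eta^n = P_h u^n - u^n$, where $P_h$ is the elliptic projection defined by \eqref{eq:ellip_proj}. The term $\eta^n$ is controlled immediately by the projection estimate \eqref{eq:ell_proj_est_1} together with $\|u(\cdot,t)\|_{\tilde H^3(\Omega)}\le \|u_0\|_{\tilde H^3(\Omega)}+\|u_t\|_{L^2(0,T;\tilde H^3(\Omega))}$, exactly as in \eqref{eq:semi_proof_1}, so everything reduces to estimating $\tbar\xi^n\tbar$. Subtracting the weak form \eqref{eq:weak_form} evaluated at $t=t^n$ from the fully discrete scheme \eqref{eq:DG-IFE_full_disc} with $\theta=1$, and using the orthogonality $a_\epsilon(\eta^n,v_h)=0$, produces the error equation \eqref{eq:Euler_1} with truncation error $r^n=-(u_t^n-\partial_t u^n)$. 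Testing with $v_h=\partial_t\xi^n$ and applying Young's inequality gives \eqref{eq:Euler_2}; the remaining task is to absorb $a_\epsilon(\xi^n,\partial_t\xi^n)$, which is handled differently in the symmetric and nonsymmetric cases.

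For $\epsilon=-1$ I would use the identity $a_\epsilon(\xi^n,\xi^n-\xi^{n-1})=\frac{1}{2}\big(a_\epsilon(\xi^n,\xi^n)-a_\epsilon(\xi^{n-1},\xi^{n-1})+a_\epsilon(\xi^n-\xi^{n-1},\xi^n-\xi^{n-1})\big)$, discard the nonnegative last term, and sum the resulting discrete energy inequality over $n=1,\dots,k$ so that the $a_\epsilon(\xi^n,\xi^n)$ terms telescope. The right-hand side contributions $\Delta t\sum\|\partial_t\eta^n\|^2$ and $\Delta t\sum\|r^n\|^2$ are bounded by writing $\partial_t\eta^n$ and $r^n$ as averaged integrals of $\eta_t$ and $u_{tt}$ over $[t^{n-1},t^n]$ and applying Cauchy--Schwarz, which after \eqref{eq:ell_proj_est_2} yields $Ch^2\|u_t\|_{L^2(0,T;\tilde H^3(\Omega))}^2$ and $C(\Delta t)^2\|u_{tt}\|_{L^2(0,T;L^2(\Omega))}^2$; the coercivity estimate \eqref{eq:coarcivity} then turns $a_\epsilon(\xi^k,\xi^k)$ into $\tbar\xi^k\tbar^2$, and the triangle inequality with the $\eta$ bound closes \eqref{eq:thm3.2-1}.

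For $\epsilon=0,1$ the same splitting leaves the antisymmetric remainder $\frac{1}{2}\big(a_\epsilon(\partial_t\xi^n,\xi^n)-a_\epsilon(\xi^{n-1},\partial_t\xi^n)\big)$, which by continuity of $a_\epsilon$ on $S_h(\mathcal{T}_h)$ (a consequence of the trace inequality \eqref{eq:trace_ife_2}) is dominated by $C(\tbar\partial_t\xi^n\tbar^2+\tbar\xi^n\tbar^2+\tbar\xi^{n-1}\tbar^2)$. Since $\tbar\partial_t\xi^n\tbar$ now appears on the right, I would derive a companion estimate: form the difference quotient of \eqref{eq:Euler_1} to get \eqref{eq:Euler_11}, test it with $v_h=\partial_t\xi^n$, use coercivity, multiply by $2\Delta t$ and sum, so that $\|\partial_t\xi^n\|^2$ telescopes and $\Delta t\sum\tbar\partial_t\xi^n\tbar^2$ is controlled by $\Delta t\sum(\|\partial_{tt}\eta^n\|^2+\|\partial_t r^n\|^2)$ and $\|\partial_t\xi^1\|^2$. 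The seed value $\|\partial_t\xi^1\|$ is bounded by taking $n=1$, $v_h=\partial_t\xi^1=\xi^1/\Delta t$ in \eqref{eq:Euler_1} and invoking coercivity, giving $\|\partial_t\xi^1\|^2\le C(\|\partial_t\eta^1\|^2+\|r^1\|^2)$. Substituting the companion bound back, estimating $\partial_{tt}\eta^n$ and $\partial_t r^n$ by the same integral-representation-and-H\"older device (now using \eqref{eq:ell_proj_est_3} and the $H^3(0,T;L^2(\Omega))$ regularity of $u$ for $u_{ttt}$), and applying the discrete Gronwall inequality yields \eqref{eq:thm3.2-2}; the low-order endpoint terms $\|\partial_t\eta^1\|^2$ and $\|r^1\|^2$ only carry the non-averaged factors $\tfrac{1}{\Delta t}\int_0^{\Delta t}\|u_t\|_{\tilde H^3(\Omega)}^2\,dt$ and $\tfrac{1}{\Delta t}\int_0^{\Delta t}\|u_{tt}\|^2\,dt$, which is why they survive explicitly in the final bound.

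The hard part will be the nonsymmetric case: the $\xi$-energy estimate does not close on its own because the antisymmetric remainder couples it to $\tbar\partial_t\xi^n\tbar$, so one must run a second, differenced energy estimate and feed the two into a discrete Gronwall argument, all while carefully tracking exactly which time derivatives of $u$ and which spatial norms enter so that the stated regularity $u\in H^2(0,T;\tilde H^3(\Omega))\cap H^3(0,T;L^2(\Omega))$ is precisely what is used; correctly handling the first-step ($n=1$) contributions, where no extra $\Delta t$-averaging is available, is the delicate bookkeeping point.
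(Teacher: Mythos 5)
Your proposal follows essentially the same route as the paper's own proof: the identical elliptic-projection splitting $u_h^n-u^n=\xi^n+\eta^n$, the same test function $v_h=\partial_t\xi^n$ and telescoping identity for $\epsilon=-1$, and for $\epsilon=0,1$ the same two-level argument in which the antisymmetric remainder is absorbed via a companion estimate for $\Delta t\sum_n\tbar\partial_t\xi^n\tbar^2$ obtained from the differenced error equation, seeded by the $n=1$ bound with $v_h=\xi^1/\Delta t$ and closed by discrete Gronwall. The treatment of the truncation and projection terms, including the surviving non-averaged first-step contributions, matches the paper's \eqref{eq:Euler_5}--\eqref{eq:Euler_18}, so the outline is correct.
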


\noindent
\subsection{ Crank-Nicolson scheme}
Now we consider the error analysis for the Crank-Nicolson scheme corresponding to $\theta=1/2$ in \eqref{eq:DG-IFE_full_disc}. We only consider the symmetric case in which $\epsilon=-1$.
\\

 From \eqref{eq:weak_form}, \eqref{eq:DG-IFE_full_disc} and \eqref{eq:coarcivity}, we have
\begin{eqnarray}
&&\left(\partial_t \xi^n,
v_h\right)+\frac{1}{2}a_\epsilon(\xi^{n}+\xi^{n-1},v_h)=(\partial_t \eta^n,
v_h)+(r_1^n, v_h)+(r_2^n, v_h),\ \ \forall v_h\in
S_{h}(\mathcal{T}_h), \label{eq:CN_21}
\end{eqnarray}
where
\begin{eqnarray*}
&&r_1^n=u_t^{n-1/2}-\frac{1}{2}(u_t^n+u_t^{n-1}),~~r_2^n=-(u_t^{n-1/2}-\partial_t u^n).
\end{eqnarray*}
Taking $v_h=\partial_t \xi^n=(\xi^{n}-\xi^{n-1})/\Delta t$, we get
\begin{eqnarray}
\|\partial_t \xi^n\|^2+
\frac{1}{2\Delta t}a_\epsilon(\xi^{n}+\xi^{n-1},\xi^{n}-\xi^{n-1})&\leq&\Big(\|\partial_t \eta^n\|
+\|r_1^n\|+\|r_2^n\|\Big)\|\partial_t \xi^n\| \nonumber \\
&\leq&C\Big(\|\partial_t \eta^n\|^2
+\|r_1^n\|^2+\|r_2^n\|^2\Big)+\frac{1}{2}\|\partial_t \xi^n\|^2. \label{eq:CN_22}
\end{eqnarray}
Due to the symmetry of $a_{\epsilon}(\cdot,\cdot)$ (when $\epsilon=-1$) we have
\begin{eqnarray}\nonumber
\|\partial_t \xi^n\|^2+
\frac{1}{2\Delta t}\Big(a_\epsilon(\xi^{n},\xi^{n})-a_\epsilon(\xi^{n-1},\xi^{n-1})\Big)&\leq&\Big(\|\partial_t \eta^n\|
+\|r_1^n\|+\|r_2^n\|\Big)\|\partial_t \xi^n\|\\
&\leq&C\Big(\|\partial_t \eta^n\|^2
+\|r_1^n\|^2+\|r_2^n\|^2\Big). \label{eq:CN_23}
\end{eqnarray}
Multiplying \eqref{eq:CN_23} by $2\Delta t$ and summing over $n$, we have
\begin{eqnarray}
\kappa\tbar\xi^k\tbar^2\leq a_\epsilon(\xi^{k},\xi^{k})&\leq&C\sum\limits_{n=1}^k\Delta t\Big(\|\partial_t \eta^n\|^2
+\|r_1^n\|^2+\|r_2^n\|^2\Big). \label{eq:CN_24}
\end{eqnarray}
Note that \eqref{eq:Euler_5} provides a bound for $\|\partial_t \eta^n\|^2$, hence we only need to estimate $\|r_1^n\|^2$ and $\|r_2^n\|^2$. Applying Taylor formula and H\"{o}lder's inequality, we obtain
\begin{eqnarray}\nonumber
\|r_1^n\|^2&=&\int_{\Omega}\Big(u_t^{n-1/2}-\frac{1}{2}(u_t^n+u_t^{n-1})\Big)^2d\mathbf{x}\\ \nonumber
&=&\int_{\Omega}\frac{1}{4}\left(\int_{t^{n-1}}^{t^{n-1/2}}u_{ttt}(t-t^{n-1})dt+\int_{t^{n-1/2}}^{t^{n}}u_{ttt}(t^n-t)dt\right)^2d\mathbf{x}\\
&\leq& C(\Delta t)^3\int_{t^{n-1}}^{t^{n}}\|u_{ttt}\|^2dt, \label{eq:CN_25}
\end{eqnarray}
and similarly
\begin{equation}\label{eq:CN_26}
  \|r_2^n\|^2 \leq C(\Delta t)^3\int_{t^{n-1}}^{t^{n}}\|u_{ttt}\|^2dt
\end{equation}
Put \eqref{eq:Euler_5}, \eqref{eq:CN_25} and \eqref{eq:CN_26} in \eqref{eq:CN_24} then we have
\[
\tbar\xi^k\tbar^2\leq C\Big(h^2\|u_t\|^2_{L^2(0,T;\tilde H^3(\Omega))}+(\Delta t)^4\|u_{ttt}\|^2_{L^2(0,T;L^2(\Omega))}\Big).
\]
Now we summarize the result in the following theorem.
\begin{theorem}
Assume that $u\in H^1(0,T;\tilde H^3(\Omega))\cap H^3(0,T;L^2(\Omega))$ is a solution to the interface problem \eqref{eq:parab_eq}-\eqref{eq:parab_eq_jump_2} and $u_0\in \tilde H^3(\Omega)$. Assume $\left\{u_h^n\right\}_{n=0}^{N_t}$ is the solution of Crank-Nicolson scheme \eqref{eq:DG-IFE_full_disc}-\eqref{eq:DG-IFE_full_disc_ic} with $\epsilon=-1$. Then, there exists a positive constant $C$ independent of $h$ and $\Delta t$ such that for all $0\leq n\leq N_t$
\begin{equation}\label{eq: thm3.3}
\tbar u_h^n -u^n\tbar\leq C\Big(h(\|u_0\|_{\tilde
H^3(\Omega)}+\|u_t\|_{L^2(0,T;\tilde
H^3(\Omega))})+(\Delta t)^2\|u_{ttt}\|_{L^2(0,T;L^2(\Omega))}\Big).
\end{equation}
\end{theorem}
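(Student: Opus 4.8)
The plan is to run the classical elliptic-projection splitting for parabolic problems, specialized to the symmetric form $a_{-1}$, and to extract the second-order temporal rate from a midpoint Taylor expansion of the consistency error. First I would decompose $u_h^n-u^n=\xi^n+\eta^n$ with $\xi^n=u_h^n-P_hu^n$ and $\eta^n=P_hu^n-u^n$, where $P_h$ is the elliptic projection \eqref{eq:ellip_proj}. Writing $u^n=u_0+\int_0^{t^n}u_t\,d\tau$ and invoking \eqref{eq:ell_proj_est_1}, exactly as for \eqref{eq:semi_proof_1}, yields $\tbar\eta^n\tbar\le Ch\|u^n\|_{\tilde H^3(\Omega)}\le Ch\big(\|u_0\|_{\tilde H^3(\Omega)}+\|u_t\|_{L^2(0,T;\tilde H^3(\Omega))}\big)$, which uses the hypothesis $u\in H^1(0,T;\tilde H^3(\Omega))$. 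Hence it suffices to bound $\tbar\xi^k\tbar$ for $1\le k\le N_t$.

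Next I would set up the error equation. Averaging the continuous weak form \eqref{eq:weak_form} at $t^{n}$ and $t^{n-1}$, subtracting it from the Crank--Nicolson scheme \eqref{eq:DG-IFE_full_disc} with $\theta=\tfrac12$, and discarding the elliptic-projection term $a_\epsilon\big(\tfrac12(\eta^n+\eta^{n-1}),v_h\big)=0$, gives \eqref{eq:CN_21}; inserting the midpoint value $u_t^{n-1/2}$ splits the temporal consistency error into the quadrature part $r_1^n=u_t^{n-1/2}-\tfrac12(u_t^n+u_t^{n-1})$ and the difference-quotient part $r_2^n=-(u_t^{n-1/2}-\partial_t u^n)$. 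Taking $v_h=\partial_t\xi^n$ and using the symmetry of $a_{-1}$ to telescope, $\tfrac1{2\Delta t}a_\epsilon(\xi^n+\xi^{n-1},\xi^n-\xi^{n-1})=\tfrac1{2\Delta t}\big(a_\epsilon(\xi^n,\xi^n)-a_\epsilon(\xi^{n-1},\xi^{n-1})\big)$, then absorbing $\tfrac12\|\partial_t\xi^n\|^2$ by Young's inequality, I arrive at \eqref{eq:CN_23}. Multiplying by $2\Delta t$, summing over $n=1,\dots,k$, using $\xi^0=0$ (because $u_h^0=P_hu_0$), and applying the coercivity \eqref{eq:coarcivity} produces \eqref{eq:CN_24}, i.e.\ $\kappa\tbar\xi^k\tbar^2\le C\sum_{n=1}^k\Delta t\big(\|\partial_t\eta^n\|^2+\|r_1^n\|^2+\|r_2^n\|^2\big)$.

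Then I would bound the three terms on the right. For $\partial_t\eta^n=\tfrac1{\Delta t}\int_{t^{n-1}}^{t^n}\eta_t\,d\tau$, H\"{o}lder's inequality together with the commuting property $(P_hu)_t=P_hu_t$ and \eqref{eq:ell_proj_est_2} gives \eqref{eq:Euler_5}, so that $\Delta t\sum_{n=1}^k\|\partial_t\eta^n\|^2\le Ch^2\|u_t\|_{L^2(0,T;\tilde H^3(\Omega))}^2$. For $r_1^n$ and $r_2^n$ I would Taylor-expand $u_t$ (respectively $u$) about $t^{n-1/2}$ with integral remainder; the zeroth- and first-order contributions cancel, and the surviving second-order remainder in $u_{ttt}$ is bounded via H\"{o}lder's inequality by $C(\Delta t)^3\int_{t^{n-1}}^{t^n}\|u_{ttt}\|^2\,dt$, as in \eqref{eq:CN_25}--\eqref{eq:CN_26}; summing gives $\Delta t\sum_{n=1}^k(\|r_1^n\|^2+\|r_2^n\|^2)\le C(\Delta t)^4\|u_{ttt}\|_{L^2(0,T;L^2(\Omega))}^2$, using $u\in H^3(0,T;L^2(\Omega))$. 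Inserting these into \eqref{eq:CN_24} yields $\tbar\xi^k\tbar^2\le C\big(h^2\|u_t\|_{L^2(0,T;\tilde H^3(\Omega))}^2+(\Delta t)^4\|u_{ttt}\|_{L^2(0,T;L^2(\Omega))}^2\big)$, and the triangle inequality with the first-step bound on $\tbar\eta^k\tbar$ gives \eqref{eq: thm3.3}.

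The step I expect to be most delicate is securing the sharp $(\Delta t)^2$ rate rather than only $O(\Delta t)$: this rests on anchoring the consistency estimates at the midpoint $t^{n-1/2}$, so that the leading first-order parts of \emph{both} $r_1^n$ and $r_2^n$ vanish, and on the symmetry of $a_{-1}$, which makes the stiffness contribution telescope with no commutator residue. For $\epsilon=0$ or $1$ the form is nonsymmetric, and one would instead be forced to estimate $\partial_t\xi^n$ separately and close the argument with Gronwall's inequality, exactly as in the backward Euler analysis \eqref{eq:Euler_9}--\eqref{eq:Euler_14}; that is why only the symmetric case $\epsilon=-1$ is treated here.
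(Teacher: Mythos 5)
Your proposal is correct and follows essentially the same route as the paper: the elliptic-projection splitting, the error equation with the midpoint consistency terms $r_1^n$ and $r_2^n$, the test function $v_h=\partial_t\xi^n$ with symmetric telescoping, and the Taylor expansions about $t^{n-1/2}$ yielding the $(\Delta t)^4$ bound are all exactly the paper's argument. Your closing remarks on why the midpoint anchoring and the symmetry of $a_{-1}$ are what secure the second-order rate accurately reflect the structure of the paper's proof as well.
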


\section{Numerical Examples}
\setcounter{equation}{0}

\noindent
In this section, we report some numerical results of DG-IFE methods for parabolic interface problems. Let the solution domain be $\Omega\times[0,T]$, where $\Omega$ is the unit square $(0,1)\times(0,1)$, and $T=1$. The interface curve $\Gamma$ is an ellipse centered at the point $(x_0,y_0)$ with semi-radius $a$ and $b$. The parametric form is given by
\begin{equation}\label{eq: ellipse parametric form}
  \left\{
    \begin{array}{ll}
      x = x_0 + a\cos(\theta), \\
      y = y_0 + b\sin(\theta).
    \end{array}
  \right.
\end{equation}
In our computation, we choose $x_0 = y_0 = 0$, $a = \pi/4$, $b=\pi/6$, and we consider the first quadrant of the ellipse as the interface, \emph{i.e.}, $\theta\in [0,\frac{\pi}{2}]$. Note that the interface curve $\Gamma$ touches the boundary of $\Omega$ and separates $\Omega$ into two sub-domains denoted by

\begin{equation*}
  \Omega^- = \{\mathbf{x}:r(\mathbf{x})<1\},~~~\text{and}~~~
  \Omega^+= \{\mathbf{x}:r(\mathbf{x})>1\}
\end{equation*}
where
\begin{equation*}
  r(\mathbf{x})=r(x,y) = \sqrt{\frac{(x-x_0)^2}{a^2} + \frac{(y-y_0)^2}{b^2}}.
\end{equation*}
The source function $f$ and the boundary function $g$ in the parabolic interface problem are chosen such that the exact solution $u$ is as follows
\begin{equation}\label{eq: true solution}
    u(\mathbf{x},t) =
    \left\{
      \begin{array}{ll}
        \frac{1}{\beta^-}r(\mathbf{x})^p e^{t}, & ~\text{if~} \mathbf{x}\in \Omega^- , \\
        \left(\frac{1}{\beta^+}r(\mathbf{x})^p - \frac{1}{\beta^+} + \frac{1}{\beta^-}\right) e^{t}, & ~\text{if~} \mathbf{x}\in \Omega^+,
      \end{array}
    \right.
\end{equation}
where $p = 5$ and the coefficients $\beta^{\pm}$ vary in different examples.

We use Cartesian rectangular meshes $\mathcal{T}_h, h>0$ formed by partitioning $\Omega$ into $N_s\times N_s$ congruent rectangles of size $h = 1/N_s$ for a set of integers $N_s$. For the fully discretization, we divide the time interval $[0,T]$ uniformly into $N_t$ subintervals with $t_n = n\Delta t$, $n=0,1,\cdots, N_t$, and $\Delta t = T/N_t$.

\subsubsection*{Example 1: Moderate Jump $(\beta^-,\beta^+) = (1,10)$}
First we choose diffusion coefficient $(\beta^-,\beta^+) = (1,10)$ which represents a moderate discontinuity across the interface. Both the nonsymmetric and symmetric DG-IFE schemes are employed to solve the elliptic interface problem at each time level. We choose the penalty parameters $\sigma_e = 100$ for symmetric DG-IFE scheme and $\sigma_e = 1$ for nonsymmetric DG-IFE scheme. Backward Euler and Crank-Nicolson schemes are used for fully discretization. Errors of IFE solutions in $L^\infty$, $L^2$, and semi-$H^1$ norms are computed at the final time level, \emph{i.e.}, $t=1$. Data listed in Table \ref{table: nonsym DGIFE error 1 10} and Table \ref{table: sym DGIFE error 1 10} are generated with time step size $\Delta t = 2h$.

\begin{table}[ht]
\begin{center}
\begin{footnotesize}
\begin{tabular}{|c|cc||cc||cc||cc|cc||cc|}
\hline
& \multicolumn{6}{|c||}{\textbf{Backward Euler}}&  \multicolumn{6}{|c|}{\textbf{Crank Nicolson}}\\
\hline
$N_s$
& $\|\cdot\|_{L^\infty}$ & rate
& $\|\cdot\|_{L^2}$ & rate
& $|\cdot|_{H^1}$ & rate
& $\|\cdot\|_{L^\infty}$ & rate& $\|\cdot\|_{L^2}$ & rate
& $|\cdot|_{H^1}$ & rate  \\
 \hline
$10$      &$1.85E{-1}$ &         &$1.37E{-1}$ &         &$2.16E{-0}$ &
          &$2.43E{-1}$ &         &$1.59E{-1}$ &         &$2.21E{-0}$ &      \\
$20$      &$5.06E{-2}$ & 1.87    &$3.52E{-2}$ & 1.96    &$1.08E{-0}$ & 1.01
          &$5.00E{-2}$ & 2.28    &$3.65E{-2}$ & 2.13    &$1.07E{-0}$ & 1.04 \\
$40$      &$1.38E{-2}$ & 1.87    &$9.16E{-3}$ & 1.94    &$5.41E{-1}$ & 0.99
          &$1.34E{-2}$ & 1.90    &$9.43E{-3}$ & 1.95    &$5.40E{-1}$ & 0.99 \\
$80$      &$3.77E{-3}$ & 1.87    &$2.41E{-3}$ & 1.93    &$2.71E{-1}$ & 1.00
          &$3.43E{-3}$ & 1.96    &$2.36E{-3}$ & 2.00    &$2.70E{-1}$ & 1.00 \\
$160$     &$1.05E{-3}$ & 1.84    &$6.65E{-4}$ & 1.86    &$1.36E{-1}$ & 1.00
          &$8.59E{-4}$ & 2.00    &$5.92E{-4}$ & 2.00    &$1.36E{-1}$ & 1.00 \\
$320$     &$3.21E{-4}$ & 1.71    &$1.97E{-4}$ & 1.75    &$6.80E{-2}$ & 1.00
          &$2.18E{-4}$ & 1.98    &$1.48E{-4}$ & 2.00    &$6.80E{-2}$ & 1.00 \\
\hline
\end{tabular}
\caption{Errors of nonsymmetric DG-IFE solutions with $\beta^- = 1$, $\beta^+ = 10$}
\label{table: nonsym DGIFE error 1 10}
\end{footnotesize}
\end{center}
\end{table}

\begin{table}[ht]
\begin{center}
\begin{footnotesize}
\begin{tabular}{|c|cc||cc||cc||cc|cc||cc|}
\hline
& \multicolumn{6}{|c||}{\textbf{Backward Euler}}&  \multicolumn{6}{|c|}{\textbf{Crank Nicolson}}\\
\hline
$N_s$
& $\|\cdot\|_{L^\infty}$ & rate
& $\|\cdot\|_{L^2}$ & rate
& $|\cdot|_{H^1}$ & rate
& $\|\cdot\|_{L^\infty}$ & rate& $\|\cdot\|_{L^2}$ & rate
& $|\cdot|_{H^1}$ & rate  \\
 \hline
$10$      &$8.91E{-2}$ &         &$6.19E{-2}$ &         &$2.11E{-0}$ &
          &$1.11E{-1}$ &         &$6.63E{-2}$ &         &$2.12E{-0}$ &      \\
$20$      &$2.56E{-2}$ & 1.80    &$1.523{-2}$ & 2.02    &$1.07E{-0}$ & 0.98
          &$1.63E{-2}$ & 2.77    &$1.70E{-2}$ & 1.96    &$1.07E{-0}$ & 0.99 \\
$40$      &$6.91E{-3}$ & 1.89    &$3.85E{-3}$ & 1.98    &$5.40E{-1}$ & 0.99
          &$4.65E{-3}$ & 1.81    &$4.13E{-3}$ & 2.05    &$5.39E{-1}$ & 0.98 \\
$80$      &$1.91E{-3}$ & 1.86    &$1.04E{-3}$ & 1.89    &$2.71E{-1}$ & 0.99
          &$1.33E{-3}$ & 1.80    &$1.07E{-3}$ & 2.02    &$2.71E{-1}$ & 0.99 \\
$160$     &$5.15E{-4}$ & 1.89    &$3.06E{-4}$ & 1.76    &$1.36E{-1}$ & 1.00
          &$3.97E{-4}$ & 1.75    &$2.52E{-4}$ & 2.01    &$1.36E{-1}$ & 1.00 \\
$320$     &$1.35E{-4}$ & 1.92    &$1.04E{-4}$ & 1.57    &$6.79E{-1}$ & 1.00
          &$1.17E{-4}$ & 1.76    &$6.27E{-5}$ & 2.01    &$6.79E{-1}$ & 1.00 \\
\hline
\end{tabular}
\caption{Errors of symmetric DG-IFE solutions with $\beta^- = 1$, $\beta^+ = 10$}
\label{table: sym DGIFE error 1 10}
\end{footnotesize}
\end{center}
\end{table}
In Table \ref{table: nonsym DGIFE error 1 10} and Table \ref{table: sym DGIFE error 1 10}, errors in semi-$H^1$ norm, which is equivalent to energy norm, have optimal convergence rate $O(h)$ for both nonsymmetric and symmetric DG-IFE schemes. These results confirm our theoretical error analysis \eqref{eq:thm3.2-1} and \eqref{eq:thm3.2-2} for backward Euler error estimation and \eqref{eq: thm3.3} for Crank-Nicolson error estimation. We also note that convergence rate of errors of Crank-Nicolson solutions in $L^2$ norm are $O(h^2)$, although we do not have the corresponding theoretical analysis yet. For backward Euler, the $L^2$ convergence rate is decreasing from $O(h^2)$ to $O(h)$ as we perform uniform mesh refinement. Because for small $h$, error in time discretization dominates, which has only the first order.

\subsubsection*{Example 2: Flipped Coefficient $(\beta^-,\beta^+) = (10,1)$}
In this example we test the robustness of the algorithm by flipping the diffusion coefficient such that  $(\beta^-,\beta^+) = (10,1)$. This represents a change of the material property. Again, we use both nonsymmetric and symmetric DG-IFE schemes. The penalty parameters are chosen as $\sigma_e = 100$ for symmetric DG-IFE scheme and $\sigma_e = 1$ for nonsymmetric DG-IFE scheme. Errors of IFE solutions are computed at the final time level, \emph{i.e.}, $t=1$, and are reported in Table \ref{table: nonsym DGIFE error 10 1} and Table \ref{table: sym DGIFE error 10 1}. We can see that the pattern of error decay are similar to the first example.

\begin{table}[ht]
\begin{center}
\begin{footnotesize}
\begin{tabular}{|c|cc||cc||cc||cc|cc||cc|}
\hline
& \multicolumn{6}{|c||}{\textbf{Backward Euler}}&  \multicolumn{6}{|c|}{\textbf{Crank Nicolson}}\\
\hline
$N_s$
& $\|\cdot\|_{L^\infty}$ & rate
& $\|\cdot\|_{L^2}$ & rate
& $|\cdot|_{H^1}$ & rate
& $\|\cdot\|_{L^\infty}$ & rate& $\|\cdot\|_{L^2}$ & rate
& $|\cdot|_{H^1}$ & rate  \\
 \hline
$10$      &$9.37E{-1}$ &         &$8.37E{-1}$ &         &$1.82E{+1}$ &
          &$1.19E{-0}$ &         &$8.66E{-1}$ &         &$1.83E{+1}$ &      \\
$20$      &$2.64E{-1}$ & 1.83    &$2.29E{-1}$ & 1.87    &$9.08E{-0}$ & 1.00
          &$1.87E{-1}$ & 2.67    &$2.23E{-1}$ & 1.96    &$9.06E{-0}$ & 1.01 \\
$40$      &$7.50E{-2}$ & 1.81    &$6.37E{-2}$ & 1.85    &$4.53E{-0}$ & 1.00
          &$5.21E{-2}$ & 1.84    &$5.72E{-2}$ & 1.96    &$4.53E{-0}$ & 1.00 \\
$80$      &$2.22E{-2}$ & 1.76    &$1.87E{-2}$ & 1.77    &$2.27E{-0}$ & 1.00
          &$1.40E{-2}$ & 1.90    &$1.45E{-2}$ & 1.98    &$2.26E{-0}$ & 1.00 \\
$160$     &$9.12E{-3}$ & 1.28    &$6.06E{-3}$ & 1.63    &$1.13E{-0}$ & 1.00
          &$3.62E{-3}$ & 1.95    &$3.64E{-3}$ & 1.99    &$1.13E{-0}$ & 1.00 \\
$320$     &$4.07E{-3}$ & 1.16    &$2.22E{-3}$ & 1.45    &$5.66E{-1}$ & 1.00
          &$9.24E{-4}$ & 1.97    &$9.14E{-4}$ & 2.00    &$5.66E{-1}$ & 1.00 \\
\hline
\end{tabular}
\caption{Errors of nonsymmetric DG-IFE solutions with $\beta^- = 10$, $\beta^+ = 1$}
\label{table: nonsym DGIFE error 10 1}
\end{footnotesize}
\end{center}
\end{table}

\begin{table}[ht]
\begin{center}
\begin{footnotesize}
\begin{tabular}{|c|cc||cc||cc||cc|cc||cc|}
\hline
& \multicolumn{6}{|c||}{\textbf{Backward Euler}}&  \multicolumn{6}{|c|}{\textbf{Crank Nicolson}}\\
\hline
$N_s$
& $\|\cdot\|_{L^\infty}$ & rate
& $\|\cdot\|_{L^2}$ & rate
& $|\cdot|_{H^1}$ & rate
& $\|\cdot\|_{L^\infty}$ & rate& $\|\cdot\|_{L^2}$ & rate
& $|\cdot|_{H^1}$ & rate  \\
 \hline
$10$      &$7.23E{-1}$ &         &$5.56E{-1}$ &         &$1.81E{+1}$ &
          &$9.04E{-1}$ &         &$5.61E{-1}$ &         &$1.81E{+1}$ &      \\
$20$      &$2.18E{-1}$ & 1.73    &$1.41E{-1}$ & 1.98    &$9.07E{-0}$ & 1.00
          &$1.79E{-1}$ & 2.34    &$1.38E{-1}$ & 2.02    &$9.06E{-0}$ & 1.00 \\
$40$      &$6.21E{-2}$ & 1.82    &$3.88E{-2}$ & 1.86    &$4.53E{-0}$ & 1.00
          &$5.54E{-2}$ & 1.69    &$3.35E{-2}$ & 2.04    &$4.53E{-0}$ & 1.00 \\
$80$      &$1.75E{-2}$ & 1.83    &$1.20E{-2}$ & 1.69    &$2.26E{-0}$ & 1.00
          &$1.60E{-2}$ & 1.80    &$8.24E{-3}$ & 2.02    &$2.26E{-0}$ & 1.00 \\
$160$     &$5.93E{-3}$ & 1.56    &$4.33E{-3}$ & 1.48    &$1.13E{-0}$ & 1.00
          &$4.54E{-3}$ & 1.82    &$2.04E{-3}$ & 2.01    &$1.13E{-0}$ & 1.00 \\
$320$     &$3.26E{-3}$ & 0.86    &$1.78E{-3}$ & 1.28    &$5.66E{-1}$ & 1.00
          &$1.24E{-3}$ & 1.88    &$5.08E{-4}$ & 2.01    &$5.66E{-1}$ & 1.00 \\
\hline
\end{tabular}
\caption{Errors of symmetric DG-IFE solutions with $\beta^- = 10$, $\beta^+ = 1$}
\label{table: sym DGIFE error 10 1}
\end{footnotesize}
\end{center}
\end{table}

\subsubsection*{Example 3: Large Jump $(\beta^-,\beta^+) = (1,10000)$ and $(\beta^-,\beta^+) = (10000,1)$}
In this example we enlarge the contrast of the diffusion coefficient such that $(\beta^-,\beta^+) = (1,10000)$, and $(\beta^-,\beta^+) = (10000,1)$ . Here we use  nonsymmetric DG-IFE scheme and the penalty parameter is chosen as $\sigma_e = 1$. Data listed in Table \ref{table: nonsym DGIFE error 1 10000} and Table \ref{table: nonsym DGIFE error 10000 1} are generated with time step size $\Delta t = 2h$.

\begin{table}[htb]
\begin{center}
\begin{footnotesize}
\begin{tabular}{|c|cc||cc||cc||cc|cc||cc|}
\hline
& \multicolumn{6}{|c||}{\textbf{Backward Euler}}&  \multicolumn{6}{|c|}{\textbf{Crank Nicolson}}\\
\hline
$N_s$
& $\|\cdot\|_{L^\infty}$ & rate
& $\|\cdot\|_{L^2}$ & rate
& $|\cdot|_{H^1}$ & rate
& $\|\cdot\|_{L^\infty}$ & rate& $\|\cdot\|_{L^2}$ & rate
& $|\cdot|_{H^1}$ & rate  \\
 \hline
$10$      &$1.39E{-1}$ &         &$3.06E{-2}$ &         &$1.08E{-0}$ &
          &$1.80E{-1}$ &         &$3.22E{-2}$ &         &$1.10E{-0}$ &      \\
$20$      &$5.07E{-2}$ & 1.46    &$8.67E{-3}$ & 1.82    &$5.68E{-1}$ & 0.93
          &$3.67E{-2}$ & 2.29    &$8.89E{-3}$ & 1.86    &$5.64E{-1}$ & 0.97 \\
$40$      &$1.36E{-2}$ & 1.89    &$2.33E{-3}$ & 1.89    &$2.94E{-1}$ & 0.95
          &$1.08E{-2}$ & 1.76    &$2.35E{-3}$ & 1.92    &$2.93E{-1}$ & 0.95 \\
$80$      &$3.64E{-3}$ & 1.90    &$6.16E{-3}$ & 1.92    &$1.49E{-1}$ & 0.98
          &$3.24E{-3}$ & 1.74    &$6.04E{-3}$ & 1.96    &$1.49E{-1}$ & 0.98 \\
$160$     &$1.03E{-3}$ & 1.82    &$1.63E{-4}$ & 1.92    &$7.50E{-2}$ & 0.99
          &$9.52E{-4}$ & 1.77    &$1.50E{-4}$ & 2.01    &$7.49E{-2}$ & 0.99\\
$320$     &$2.78E{-4}$ & 1.89    &$4.68E{-5}$ & 1.80    &$3.76E{-2}$ & 0.99
          &$2.59E{-4}$ & 1.88    &$3.86E{-5}$ & 1.96    &$3.76E{-2}$ & 0.99 \\
\hline
\end{tabular}
\caption{Errors of nonsymmetric DG-IFE solutions with $\beta^- = 1$, $\beta^+ = 10000$}
\label{table: nonsym DGIFE error 1 10000}
\end{footnotesize}
\end{center}
\end{table}

\begin{table}[htb]
\begin{center}
\begin{footnotesize}
\begin{tabular}{|c|cc||cc||cc||cc|cc||cc|}
\hline
& \multicolumn{6}{|c||}{\textbf{Backward Euler}}&  \multicolumn{6}{|c|}{\textbf{Crank Nicolson}}\\
\hline
$N_s$
& $\|\cdot\|_{L^\infty}$ & rate
& $\|\cdot\|_{L^2}$ & rate
& $|\cdot|_{H^1}$ & rate
& $\|\cdot\|_{L^\infty}$ & rate& $\|\cdot\|_{L^2}$ & rate
& $|\cdot|_{H^1}$ & rate  \\
 \hline
$10$      &$9.36E{-1}$ &         &$8.33E{-1}$ &         &$1.82E{+1}$ &
          &$1.19E{-0}$ &         &$8.61E{-1}$ &         &$1.83E{+1}$ &      \\
$20$      &$2.64E{-1}$ & 1.83    &$2.26E{-1}$ & 1.88    &$9.08E{-0}$ & 1.00
          &$1.87E{-1}$ & 2.67    &$2.20E{-1}$ & 1.97    &$9.06E{-0}$ & 1.01 \\
$40$      &$7.50E{-2}$ & 1.81    &$6.27E{-2}$ & 1.85    &$4.53E{-0}$ & 1.00
          &$5.21E{-2}$ & 1.84    &$5.64E{-2}$ & 1.96    &$4.53E{-0}$ & 1.00 \\
$80$      &$2.17E{-2}$ & 1.79    &$1.83E{-2}$ & 1.78    &$2.27E{-0}$ & 1.00
          &$1.40E{-2}$ & 1.90    &$1.43E{-2}$ & 1.98    &$2.26E{-0}$ & 1.00 \\
$160$     &$8.92E{-3}$ & 1.28    &$5.89E{-3}$ & 1.64    &$1.13E{-0}$ & 1.00
          &$3.62E{-3}$ & 1.95    &$3.59E{-3}$ & 1.99    &$1.13E{-0}$ & 1.00\\
$320$     &$3.99E{-3}$ & 1.16    &$2.14E{-3}$ & 1.46    &$5.66E{-1}$ & 1.00
          &$9.24E{-4}$ & 1.97    &$8.99E{-5}$ & 2.00    &$5.66E{-1}$ & 1.00 \\
\hline

\end{tabular}
\caption{Errors of nonsymmetric DG-IFE solutions with $\beta^- = 10000$, $\beta^+ = 1$}
\label{table: nonsym DGIFE error 10000 1}
\end{footnotesize}
\end{center}
\end{table}

For all examples above, we also experimented linear IFE functions on structured triangular meshes, which is formed by cutting each rectangle of $\mathcal{T}_h$ into two triangles.  The numerical results are very similar to the rectangular meshes; hence, we omit the data in this article.

\section{Conclusion}
In this article, we developed a class of discontinuous Galerkin scheme for solving parabolic interface problem. Taking advantages of immersed finite element functions, the proposed methods can be used on Cartesian mesh regardless of the location of interface. \textit{A priori} error estimation shows that these DG-IFE methods converge to exact solution with an optimal order in the energy norm.

\bibliographystyle{abbrv}

\end{document}